\documentclass[12pt]{amsart}

\usepackage{amsmath,amsthm,amssymb}

\usepackage{a4wide}
\usepackage{enumerate}

\newtheorem{theorem}{Theorem}[section]
\newtheorem{lemma}[theorem]{Lemma}
\newtheorem{proposition}[theorem]{Proposition}

\newtheorem*{maintheorem}{Main Theorem}

\theoremstyle{remark}
\newtheorem{remark}[theorem]{Remark}

% Commands
\newcommand{\C}{\ensuremath{\mathbb{C}}}
\newcommand{\R}{\ensuremath{\mathbb{R}}}

\newcommand{\g}[1]{\ensuremath{\mathfrak{#1}}}
\newcommand{\II}{\ensuremath{I\! I}}

\DeclareMathOperator{\id}{id}

\DeclareMathOperator{\Exp}{Exp}

\DeclareMathOperator{\spann}{span}

\begin{document}
\title[Real hypersurfaces with two principal curvatures]{Real hypersurfaces with two principal curvatures in complex projective and hyperbolic planes}

\author[J.\ C.\ D\'{\i}az-Ramos]{Jos\'{e} Carlos D\'{\i}az-Ramos}
\author[M.\ Dom\'{\i}nguez-V\'{a}zquez]{Miguel Dom\'{\i}nguez-V\'{a}zquez}
\author[C.\ Vidal-Casti\~{n}eira]{Cristina Vidal-Casti\~{n}eira}

\address{Department of Geometry and Topology, University of Santiago de Compostela, Spain}
\email{josecarlos.diaz@usc.es}

\address{Department of Geometry and Topology, University of Santiago de Compostela, Spain}
\email{miguel.dominguez@usc.es}

\address{Departament de Matem\`atiques, Universitat Aut\`onoma de Barcelona, Spain}
\email{cvidal@mat.uab.cat}

\thanks{The authors have been supported by project MTM2009-07756 (Spain).}

\begin{abstract}
We find the first examples of real hypersurfaces with two nonconstant principal curvatures in complex projective and hyperbolic planes, and we classify them.

It turns out that each such hypersurface is foliated by equidistant Lagrangian flat surfaces with parallel mean curvature or, equivalently, by principal orbits of a cohomogeneity two polar action.
\end{abstract}

%\date{\today}

\subjclass[2010]{53C42, 53B25, 53D12, 57S15, 57S20}

\keywords{Complex projective space, complex hyperbolic space, polar action, principal curvatures, parallel mean curvature}

\maketitle

%%%%%%%%%%%%%%%%%%%%%%%%%%%%%%%%%%%%%%%%%%%%%%%%%%%%%%%%%%%%%%%%%%%%%%%%%%%%%%%%

\section{Introduction and main result}
\label{sect:Intro}

The interest of studying real hypersurfaces in K\"ahler manifolds appeared in the field of Complex Analysis. In the theory of several complex variables, an important problem is to understand the relation between holomorphic functions defined on a domain of the complex space $\C^n$, and the boundary of such domain. When this boundary is smooth, it becomes a real hypersurface, that is, a submanifold of the Euclidean space $\R^{2n}$ with {real} codimension one. See~\cite{Ja95} for a survey on real hypersurfaces from the viewpoint of Complex Analysis.

From the point of view of Differential Geometry, a problem that has attracted the attention of many mathematicians over the last few decades is the classification of real hypersurfaces in terms of different geometric conditions. The case of real hypersurfaces in nonflat complex space forms
deserves special attention, for these spaces are the nonflat K\"ahler manifolds with the simplest curvature tensor.

The main reference for the study of real hypersurfaces in complex space forms is the influential survey \cite{NR97} by Niebergall and Ryan, where the authors reviewed the basic terminology and results in the field, and included a list of open problems which has motivated many investigations over the last years. One of the problems that was still outstanding, in spite of the efforts of several geometers, is Question 9.2 in \cite{NR97}:

\begin{quote}
\emph{Are there hypersurfaces in $\C P^2$ or $\C H^2$ that have two principal curvatures, other than the standard examples?}
\end{quote}

In this work, we give a surprising answer to this question. In order to explain the problem and our contribution, we start by introducing some notation
and terminology. Let $\bar{M}^n(c)$ denote a nonflat complex space form of complex dimension $n$, constant holomorphic curvature $c\neq 0$, and complex structure $J$. Hence, $\bar{M}^n(c)$ is isometric to a complex projective space $\C P^n$ of constant holomorphic curvature $c>0$, or to a complex hyperbolic space $\C H^n$ of constant holomorphic curvature $c<0$.

Let $M$ be a real hypersurface of $\bar{M}^n(c)$, that is, a submanifold of real codimension one, and $\xi$ a unit normal vector field along $M$. Then, $J\xi$ is tangent to $M$, and is called the {Hopf} vector field of $M$. We say that $M$ is {Hopf} at $p\in M$ if $J\xi$ is an eigenvector of the shape operator of $M$ at $p$. If $M$ is Hopf at every point, we say that $M$ is a {Hopf hypersurface}.

Tashiro and Tachibana~\cite{TT63} proved that there are no umbilical real hypersurfaces (i.e.\ real hypersurfaces with exactly one principal curvature) in nonflat complex space forms. Cecil and Ryan~\cite{CR82} showed that a real hypersurface with two principal curvatures in $\C P^n$, $n\geq 3$, has constant principal curvatures, and is an open part of a geodesic sphere. Montiel~\cite{Mo85} obtained an analogous result for $\C H^n$, $n\geq 3$, showing that real hypersurfaces with two distinct principal curvatures must be open parts of geodesic spheres, tubes around a totally geodesic $\C H^{n-1}$ in $\C H^n$, tubes of radius $\frac{1}{\sqrt{-c}}\log(2+\sqrt{3})$ around a totally geodesic $\R H^n$ in $\C H^n$, or horospheres. In both cases, the examples are open parts of Hopf hypersurfaces that are homogeneous, that is, orbits of isometric actions on $\bar{M}^n(c)$.

The methods used by Cecil and Ryan, and Montiel, do not work if $n=2$. The question above states the interest of extending the classification of real hypersurfaces with two distinct principal curvatures to $\C P^2$ and $\C H^2$, and poses the problem of the existence of examples with nonconstant principal curvatures. In this work we answer Question~9.2 in \cite{NR97} affirmatively for $\C P^2$ and $\C H^2$, and obtain a complete description of all the examples.

The construction of the new hypersurfaces makes use of the notion of polar action, which we briefly recall now; see~\cite[\S3.2]{BCO03},~\cite{DR13}, and~\cite{DDK} for more information. A proper isometric action of a group $H$ on $\bar{M}^2(c)$ is called {polar} if it admits a {section}, i.e.\ a totally geodesic submanifold $\Sigma\subset \bar{M}^2(c)$ which intersects all the orbits of the action and always orthogonally. The cohomogeneity of the action is the lowest codimension of the orbits. A point $p$ in $\Sigma$ is called {regular} if the orbit through $p$ is of maximal dimension, that is, its codimension equals the cohomogeneity of the action. It is known that the subset of regular points in $\Sigma$ is an open and dense subset of $\Sigma$. Moreover, the section $\Sigma$ of a polar action of cohomogeneity two is a totally geodesic real projective plane $\R P^2$ if $c>0$, or a totally geodesic real hyperbolic plane $\R H^2$ if $c<0$.

We state now the main contribution of this work.

\begin{maintheorem}
Let $\bar{M}^2(c)$ be a complex space form of complex dimension $2$ and constant holomorphic curvature $c\neq 0$. Consider a polar action of a group $H$ acting with cohomogeneity two and with section $\Sigma$ on $\bar{M}^2(c)$.

Then, for any regular point $p\in \Sigma$ and any unit tangent vector $w\in T_p\Sigma$, there are exactly two different locally defined unit speed curves $\gamma_i\colon(-\epsilon,\epsilon)\to\Sigma$, $i=1,2$, with $\gamma_i(0)=p$ and $\dot{\gamma}_i(0)=w$, such that the set $H\cdot \gamma_i=\{h(\gamma_i(t)):h\in H,\, t\in(-\epsilon,\epsilon)\}$ is a real hypersurface with two principal curvatures in $\bar{M}^2(c)$. Generically, such hypersurface is non-Hopf and with nonconstant principal curvatures.

Conversely, let $M$ be an real hypersurface of $\bar{M}^2(c)$ with two nonconstant principal curvatures and which is non-Hopf at every point. Then $M$ is locally congruent to an open part of a real hypersurface constructed as above.
\end{maintheorem}

\begin{remark}\label{remark:ODE}
It is possible to construct the curves $\gamma_i$, $i=1,2$, in terms of a solution to an explicit system of ordinary differential equations, which we present now. Given $p\in \Sigma$ and $w\in T_p\Sigma$, let $\xi_p\in T_p\Sigma$ be a unit vector orthogonal to $w$. Let $\alpha_0$, $\beta_0$ be the principal curvatures of the surface $H\cdot p$ with respect to the normal vector $\xi_p$, $U_p$ the orthogonal projection of $J\xi_p$ onto the principal curvature space associated with $\alpha_0$, and $\phi_0\in[0,\pi/2]$ the angle between $J\xi_p$ and $U_p$. Let us assume that $\phi_0\in(0,\pi/2)$. By reversing the sign of $\xi_p$ if necessary, we can further assume that $\langle Jw,U_p\rangle>0$. Consider a local solution $(\tilde{\alpha},\tilde{\beta},\tilde{\phi})\colon (-\epsilon,\epsilon)\to \R^3$ to the ODE
\[
\begin{aligned}
\tilde{\alpha}' &=\frac{1}{4}\bigl(c(2-3\sin^2\tilde{\phi})
+4\tilde{\alpha}(\tilde{\alpha}-\tilde{\beta})\bigr)\tan\tilde{\phi},\\
\tilde{\beta}'
&=-\frac{3c}{8}\sin 2\tilde{\phi},\\
\tilde{\phi}'
&=\tilde{\beta}+\frac{c(1-3\sin^2\tilde{\phi})}{4(\tilde{\alpha}-\tilde{\beta})},
\end{aligned}
\]
with initial conditions $\tilde{\alpha}(0)=\alpha_0$, $\tilde{\beta}(0)=\beta_0$, and $\tilde{\phi}(0)=\phi_0$. Then, one of the two curves mentioned in the Main Theorem is the unique  curve $\gamma_\beta\colon(-\epsilon,\epsilon)\to\Sigma$ with curvature function $\tilde{\beta}$ and orientation given by $\xi_p$ (i.e.\ $\bar{\nabla}_{\dot{\gamma}_\beta(t)}\dot{\gamma}_\beta=\tilde{\beta}(t) \xi_{\gamma_\beta(t)}$ for all $t$, where $\bar{\nabla}$ is the Levi-Civita connection of $\Sigma$, and $\xi$ is a smooth normal vector field along $\gamma_\beta$ extending $\xi_p$ and tangent to $\Sigma$). The corresponding hypersurface $H\cdot\gamma_\beta$ has two principal curvatures $\alpha$ and $\beta$ (this one with multiplicity $2$) such that $(\alpha\circ\gamma_\beta)(t)=\tilde{\alpha}(t)$, and $(\beta\circ\gamma_\beta)(t)=\tilde{\beta}(t)$ for all $t$. The other possible curve $\gamma_\alpha$ is obtained by interchanging the roles of $\alpha_0$ and $\beta_0$ in the description above.
\end{remark}

Our Main Theorem guarantees that a real hypersurface under the above mentioned assumptions is foliated by orbits of maximal dimension of a polar action of cohomogeneity two on $\bar{M}^2(c)$. Polar actions on $\bar{M}^2(c)$ have been classified by Podest\`a and Thorbergsson~\cite{PT99} for $c>0$, and by Berndt and the first author~\cite{BD11a} for $c<0$. In particular, there is an explicit list of polar actions of cohomogeneity two on $\bar{M}^2(c)$. As an example, the group $H=U(1)\times U(1)\times U(1)$ acts polarly and with cohomogeneity two on both $\C P^2$ and $\C H^2$, and the corresponding principal orbits (equivalently, orbits of maximal dimension) are tori. This example is the only one, up to orbit equivalence, on $\C P^2$, but on $\C H^2$ there are three more polar actions up to orbit equivalence. See~Subsection~\ref{subsec:polar} for a description of these examples.

Interestingly, the proof of the Main Theorem relies on certain geometric characterization of the principal orbits of polar actions of cohomogeneity two on $\bar{M}^2(c)$, $c\neq 0$. As shown in Theorem~\ref{th:principal} below, such principal orbits are precisely the \emph{Lagrangian and flat surfaces with parallel mean curvature} in $\bar{M}^2(c)$. On the one hand, a Lagrangian surface is, in this context, a synonym of a totally real $2$-dimensional submanifold. On the other hand, the study of submanifolds with parallel mean curvature is an active field of research nowadays; see~\cite{Ch10} for a survey. In particular, the case of surfaces with parallel mean curvature in $2$-dimensional complex space forms deserves special attention, and is the subject of important recent advances, e.g.~\cite{Fe12}.

This work is organized as follows. In Section~\ref{sect:construction} we present the construction of the new examples of real hypersurfaces with two principal curvatures. Then we focus on the classification problem. In Section~\ref{sect:setup} we start by introducing some vector fields and functions naturally defined on the hypersurface. Then, in Section~\ref{sect:Levi-Civita}, we obtain an explicit expression for the Levi-Civita connection of the hypersurface. Using this information we show that our hypersurface is foliated orthogonally by equidistant, Lagrangian, flat surfaces with parallel second fundamental form, and by the integral curves of certain vector field (Section~\ref{sect:distributions}). Finally, in Section~\ref{sect:proof} we conclude the proof of the Main Theorem.

\textbf{Acknowledgments}. The authors would like to thank Professors\ J\"{u}rgen Berndt and Tillmann Jentsch for helpful observations and comments.

%%%%%%%%%%%%%%%%%%%%%%%%%%%%%%%%%%%%%%%%%%%%%%%%%%%%%%%%%%%%%%%%%%%%%%%%%%%%%%%

\section{Construction of the new examples}\label{sect:construction}

In this section we present a method to construct real hypersurfaces with two principal curvatures in nonflat complex space forms that generically have nonconstant principal curvatures.

We denote by $\bar{M}^2(c)$ a nonflat complex space form of complex dimension $2$ and constant holomorphic curvature $c\neq 0$. Hence, $\bar{M}^2(c)$ is isometric to a complex projective plane $\C P^2(c)$ of constant holomorphic curvature $c>0$, or a complex hyperbolic plane $\C H^2(c)$ of constant holomorphic curvature $c<0$. We denote by $\langle\,\cdot\,,\,\cdot\,\rangle$ the metric of $\bar{M}^2(c)$, by $J$ its complex structure, by $\bar{\nabla}$  its Levi-Civita connection, and by $\bar{R}$ its curvature tensor. Throughout this paper, we will adopt the following sign for the curvature tensor: $\bar{R}(X,Y)Z=[\bar{\nabla}_X,\bar{\nabla}_Y]Z-\bar{\nabla}_{[X,Y]}Z$. Recall also that the curvature tensor of a complex space form of constant holomorphic curvature $c$ is then written as
\begin{align*}
\langle\bar{R}(X,Y)V,W\rangle={}&
\frac{c}{4}\Bigl(
\langle Y,V\rangle\langle X,W\rangle
-\langle X,V\rangle\langle Y,W\rangle\\[-1ex]
&\phantom{\frac{c}{4}\Bigl(}
+\langle JY,V\rangle\langle JX,W\rangle
-\langle JX,V\rangle\langle JY,W\rangle
-2\langle JX,Y\rangle\langle JV,W\rangle
\Bigr).
\end{align*}

If $N$ is a submanifold of $\bar{M}^2(c)$, we will denote by $\II$ its second fundamental form, which is a symmetric tensor defined as $\II(X,Y)=(\bar{\nabla}_XY)^\perp$, where $X$ and $Y$ are tangent vectors to $N$, and $(\cdot)^\perp$ denotes orthogonal projection onto the normal bundle $\nu N$. If $\xi$ is a vector perpendicular to $N$, then we define the shape operator of $N$ with respect to $\xi$ by the formula $\langle S_\xi X,Y\rangle=\langle\II(X,Y),\xi\rangle$.\medskip

The idea behind our construction is rather simple. We start with a polar action of a group $H$ acting with cohomogeneity two on $\bar{M}^2(c)$. In one of its $2$-dimensional sections we find a (locally defined) curve $\gamma$ such that, if we attach to each point $\gamma(t)$ of $\gamma$ the $H$-orbit through $\gamma(t)$, we obtain a $3$-dimensional real hypersurface $M$ of $\bar{M}^2(c)$ with two principal curvatures at every point. The fact that $M$ has two principal curvatures is equivalent to the fact that $\gamma$ satisfies certain ordinary differential equation. Using the theorem of existence of solutions to ODEs in canonical form, we derive the existence of our hypersurfaces.

\subsection{Polar actions and Lagrangian flat surfaces with parallel mean curvature}\label{subsec:polar}

An action $H\times M\to M$, $(h,p)\mapsto h(p)$, of a group $H$ on a manifold $M$ is said to be isometric if for each $h\in H$ the map $p\mapsto h(p)$ is an isometry of $M$. It is customary to take $H$ a closed subgroup of the isometry group of $M$ so that the action is effective and proper; thus isotropy groups are compact, orbits are closed and embedded, and the orbit space is Hausdorff. Two isometric actions are said to be \emph{orbit equivalent} if there exists an isometry of $M$ that maps the orbits of one action to the orbits of the other action. A principal orbit is an orbit whose isotropy groups are minimal (with respect to the inclusion relation) up to conjugation in $H$. It turns out that principal orbits have maximal dimension. By definition, the codimension of a principal orbit is called the \emph{cohomogeneity} of the action. The union of all principal orbits is an open dense subset of $M$. We will say that a point $p\in M$ is \emph{regular} if $p$ is contained in a principal orbit.

An isometric action is said to be \emph{polar} if it has a \emph{section}, that is, a submanifold $\Sigma$ of $M$ that intersects all the orbits of $H$, and whose tangent space at any point $p\in\Sigma$ is perpendicular to the tangent space of the orbit of $H$ through $p$. A polar action admits a section through any given point. It follows from~\cite[Corollary 1.3]{Ly10} that an orbit of a polar action on a complex space form is principal if and only if has maximum dimension.

Polar actions on $2$-dimensional nonflat complex space forms have been classified in~\cite{PT99} and~\cite{BD11a}. Polar actions on $\C P^2$ or $\C H^2$ are either of cohomogeneity one or cohomogeneity two. It follows from this classification that, in the cohomogeneity two case, a section~$\Sigma$ is a totally geodesic real projective plane $\R P^2$ in $\C P^2$ if $c>0$, or a totally geodesic real hyperbolic plane $\R H^2$ in $\C H^2$ if $c<0$. We briefly discuss these classifications below. Let $H$ be a connected closed subgroup of isometries of $\bar{M}^2(c)$, $c\neq 0$, acting polarly on $\bar{M}^2(c)$ with cohomogeneity two.

In $\C P^2$ there is only one polar action of cohomogeneity two up to orbit equivalence, which corresponds to the action of the group $H=U(1)\times U(1)\times U(1)$ on $\C P^2$. The principal orbits of $H$ are $2$-dimensional tori in $\C P^2$.

In $\C H^2$ there are exactly four examples up to orbit equivalence. In order to describe them we have to introduce some notation (see~\cite{BD11a} for a detailed exposition). Let $\g{g}=\g{su}(1,2)$ be the Lie algebra of the isometry group of $\C H^2$, and $\g{k}=\g{s}(\g{u}(1)\oplus\g{u}(2))$ the isotropy Lie algebra at some point of $\C H^2$. Let $\g{g}=\g{k}\oplus\g{p}$ be the corresponding Cartan decomposition, where $\g{p}$ is the orthogonal complement of $\g{k}$ in $\g{g}$ with respect to the Killing form of $\g{g}$. Let $\g{a}$ be a maximal abelian subspace of $\g{p}$, and $\g{g}=\g{g}_{-2\alpha}\oplus\g{g}_{-\alpha}\oplus\g{g}_{0}
\oplus\g{g}_{\alpha}\oplus\g{g}_{2\alpha}$ the corresponding restricted root space decomposition. The root space $\g{g}_0$ decomposes as $\g{g}_0=\g{k}_0\oplus\g{a}$, where $\g{k}_0\cong\g{u}(1)$ is the centralizer of $\g{a}$ in $\g{k}$.

The subgroup $H=U(1)\times U(1)\times U(1)$ of $U(1,2)$ acts polarly  on $\C H^2$ with cohomogeneity two, and its principal orbits are $2$-dimensional tori in $\C H^2$. The other three cohomogeneity two polar actions on $\C H^2$ correspond to the action of the connected subgroups $H$ of $SU(1,2)$ with one of the following Lie algebras: $\g{h}=\g{g}_0$, $\g{h}=\g{k}_0\oplus\g{g}_{2\alpha}$, or $\g{h}=\ell\oplus\g{g}_{2\alpha}$, where~$\ell$ is a $1$-dimensional linear subspace of $\g{g}_\alpha$. Topologically, the principal orbits of these first two actions are $2$-dimensional cylinders, while those of the last one are flat $2$-planes.

The complex structure $J$ of the complex space form $\bar{M}^2(c)$ defines a symplectic structure $\Omega$ on $\bar{M}^2(c)$ by $\Omega(X,Y)=\langle JX,Y\rangle$, with $X$ and $Y$ vector fields on $\bar{M}^2(c)$. A submanifold $N$ is said to be \emph{Lagrangian} if it is a submanifold of maximum possible dimension such that $\Omega$ vanishes identically on each tangent space of $N$. Recall also that a submanifold $N$ is said to be \emph{totally real} if $JT_pN$ is perpendicular to $T_pN$ for any $p\in N$. It is easy to see that in $\bar{M}^2(c)$, $2$-dimensional submanifolds are Lagrangian precisely if they are totally real.

The following result provides a geometric characterization of the principal orbits of polar actions of cohomogeneity two on $\C P^2$ and $\C H^2$. It will be important for the construction of our examples, and crucial for the proof of the Main Theorem.

\begin{theorem}\label{th:principal}
Let $N$ be a submanifold of $\bar{M}^2(c)$, $c\neq 0$. Then the following statements are equivalent:
\begin{enumerate}[\rm (i)]
\item $N$ is an open part of a principal orbit of a cohomogeneity two polar action on $\bar{M}^2(c)$.
\item $N$ is a Lagrangian, flat surface of $\bar{M}^2(c)$ with parallel mean curvature.
\end{enumerate}
In this situation, $N$ has parallel second fundamental form and flat normal bundle.
\end{theorem}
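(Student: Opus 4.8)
The plan is to prove the two implications separately and to obtain the parallelism of $\II$ and the flatness of $\nu N$ along the way.

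For (i) $\Rightarrow$ (ii) (and the final assertion), I would start from the fact that in the cohomogeneity two case the section $\Sigma$ is a totally geodesic, hence totally real, $\R P^2$ or $\R H^2$. For a principal orbit $N=H\cdot p$ with $p\in\Sigma$ one has $T_pN=(T_p\Sigma)^\perp$, and since $JT_p\Sigma\perp T_p\Sigma$ and $\dim JT_p\Sigma=\dim T_pN=2$, it follows that $T_pN=JT_p\Sigma$, whence $JT_pN=-T_p\Sigma\perp T_pN$; so $N$ is Lagrangian. To get flatness, parallelism of $\II$, and flatness of $\nu N$ simultaneously, I would read off from the classification recalled in Subsection~\ref{subsec:polar} that the acting group $H$ is abelian in each of the finitely many cohomogeneity two polar actions (one on $\C P^2$, four on $\C H^2$). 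Then $N$ is a homogeneous surface under an abelian group of isometries, hence intrinsically flat, and its universal cover is $\R^2$ with the flat metric, on which the image of the acting group is the group of translations; consequently every $H$-invariant tensor field on $N$ — in particular $\II$, the curvature, and the normal curvature — lifts to a translation-invariant, hence parallel, tensor. So $\II$ is parallel (whence the mean curvature is parallel) and $\nu N$ is flat.

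For (ii) $\Rightarrow$ (i), the key and hardest step is to upgrade ``parallel mean curvature'' to ``parallel second fundamental form''. Using that $N$ is flat, I would fix a local parallel orthonormal tangent frame $\{e_1,e_2\}$; since $N$ is Lagrangian, $\nabla^\perp_X(JY)=J\nabla_X Y$, so $\{Je_1,Je_2\}$ is a $\nabla^\perp$-parallel orthonormal frame of $\nu N$. Working with the cubic form $C(X,Y,Z)=\langle\II(X,Y),JZ\rangle$ (totally symmetric, as $N$ is Lagrangian) and its components $C_{ijk}$, the Codazzi equation — whose normal term $(\bar R(X,Y)Z)^\perp$ vanishes for a Lagrangian surface of a complex space form — says that $T_{lijk}:=e_l(C_{ijk})$ is totally symmetric in all four indices, while the hypothesis $\nabla^\perp H=0$ reads $e_l(C_{11k}+C_{22k})=0$ and expresses all $T_{lijk}$ in terms of $T_{1112}$ and $T_{1122}$. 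Differentiating the Gauss equation, which (using that a Lagrangian tangent plane has ambient sectional curvature $c/4$ and that $N$ is flat) reads $\tfrac c4+C_{111}C_{122}+C_{112}C_{222}-C_{112}^2-C_{122}^2=0$, along $e_1$ and $e_2$ yields a $2\times2$ linear system in $(C_{111}-3C_{122},\,C_{222}-3C_{112})$ with coefficient determinant $\pm(T_{1112}^2+T_{1122}^2)$. Hence on the open set where $\nabla^\perp\II\neq0$ one must have $C_{111}=3C_{122}$ and $C_{222}=3C_{112}$, i.e.\ $C$ is the symmetric product of the (parallel) metric with a $1$-form $\lambda$; but then the mean curvature is a constant multiple of $J\lambda^\sharp$, so $\nabla^\perp H=0$ forces $\lambda$, and therefore $C$, to be parallel — a contradiction. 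Thus $\nabla^\perp\II=0$ everywhere, and via the identification $\nu N\cong JTN$ the normal bundle is flat as well.

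It then remains to recognize $N$, now a flat Lagrangian surface with parallel second fundamental form and flat normal bundle, as an open part of a principal orbit of a cohomogeneity two polar action. I would do this either by invoking the classification of parallel submanifolds of $\bar M^2(c)$ and matching the flat Lagrangian surfaces on that list with the orbits described in Subsection~\ref{subsec:polar}, or, more self-containedly, by noting that in the parallel frames the Gauss, Codazzi and Ricci data of $N$ have constant coefficients, so the fundamental theorem of submanifolds reconstructs $N$ up to an ambient isometry from a finite family of admissible constants, each of which corresponds (by the first implication, or by direct inspection) to a principal orbit of one of the actions classified in \cite{PT99,BD11a}. The main obstacle throughout is the parallelism step — in particular controlling the locus where the cubic form degenerates, where the argument hinges on the interplay of the Lagrangian condition, flatness and the Gauss equation (and where the sign of $c$ enters, the degenerate configuration being incompatible with $c>0$); the concluding identification, though essentially bookkeeping, still has to be organized case by case.
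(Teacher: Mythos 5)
Your proposal is correct in outline but follows a genuinely different route from the paper in both directions. For (i)$\Rightarrow$(ii) the paper does not invoke the classification of polar actions at all: it takes an $H$-equivariant (hence $\nabla^\perp$-parallel) orthonormal normal frame $\{W_1,W_2\}$, observes that $\{-JW_1,-JW_2\}$ is then a $\nabla$-parallel tangent frame because the ambient space is K\"ahler, and reads off flatness, parallelism of $\II$ and flatness of $\nu N$ from equivariance alone. Your shortcut via the abelianness of $H$ works but needs the classified list, and one inference is a non sequitur as stated: ``the normal curvature is $H$-invariant, hence parallel, hence $\nu N$ is flat'' does not follow, since a parallel tensor need not vanish; the correct repair is the parallel isomorphism $\nu N\cong JTN$, under which $R^\perp=0$ is equivalent to the flatness of $N$ you already have (or simply cite \cite{BCO03} for principal orbits of polar actions). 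For (ii)$\Rightarrow$(i) your cubic-form computation is the real divergence: the paper never proves ``parallel mean curvature $\Rightarrow$ parallel $\II$'' directly, but instead quotes the classifications of flat Lagrangian surfaces with parallel mean curvature (\cite{KM00}, \cite{Og95}, \cite{KZ00}, \cite{Hi04}, \cite{Hi06}) to identify $N$ explicitly, obtaining parallelism of $\II$ only afterwards as a byproduct of (i)$\Rightarrow$(ii). Your computation checks out: total symmetry of $T_{lijk}$ plus $\nabla^\perp\mathcal{H}=0$ give $T_{1111}=T_{2222}=-T_{1122}$ and $T_{1222}=-T_{1112}$, differentiating the Gauss identity yields the $2\times 2$ system with determinant $T_{1112}^2+T_{1122}^2$, and the degenerate alternative $C=\mathrm{sym}(g\otimes\lambda)$ forces $\mathcal{H}=4J\lambda^\sharp$, hence $\lambda$ parallel, hence $T=0$. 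This is an attractive self-contained replacement for part of the cited literature.

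Where your proposal remains materially incomplete is the endgame. Knowing that $N$ is a flat Lagrangian surface with parallel second fundamental form determined by a finite set of constants does not yet exhibit it as a principal orbit of a \emph{polar} action of cohomogeneity two, and this is exactly where the paper works hardest: for $c<0$ it extracts from \cite{KZ00} and \cite{Hi04} an explicit abelian subalgebra $\R X\oplus\R Y$ of $\g{su}(1,2)$ whose orbit is $N$, and then verifies polarity by hand via the criterion of \cite{BD11a}, checking $\langle\g{s},\g{h}\rangle=0$, the slice-representation condition, and $\langle[\g{s},\g{s}],\g{h}\rangle=0$ for a suitable totally real $\g{s}\subset\g{p}$. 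If you instead go through Naitoh \cite{Na83a}, \cite{Na83b} (the alternative the paper itself suggests in a remark) or the fundamental theorem of Lagrangian submanifolds, you still owe a proof that every admissible constant cubic form --- a two-parameter family modulo frame rotations, constrained by the Gauss identity --- is realized by an orbit of one of the classified actions, and that the realizing group acts polarly. That matching is not mere bookkeeping; it is essentially the content of the explicit computation the paper carries out, or of the classifications it cites.
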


\begin{proof}
First, let $H$ be a connected group of isometries of $\bar{M}^2(c)$, $c\neq 0$, acting polarly and with cohomogeneity two on $\bar{M}^2(c)$, and let $H\cdot p$ be a principal orbit through some regular point $p$. We can assume that $N=H\cdot p$. Given a section $\Sigma$ through $p$, we have the orthogonal direct sum $T_p\bar{M}^2(c)=T_p\Sigma\oplus T_pN$. Since the sections of $H$ are totally real submanifolds of $\bar{M}^2(c)$, it follows that $N$ is a Lagrangian submanifold. Furthermore, it is well known (see~\cite[Corollary~3.2.5]{BCO03}) that principal orbits of polar actions have flat normal bundle. In fact, every $H$-equivariant normal vector field along $N$ is parallel with respect to the normal connection $\nabla^\perp$ of $N$. Thus, let $\{W_1,W_2\}$ be an orthonormal frame of $H$-equivariant (i.e.\ $W_i=h_*W_i$ for all $h\in H$, $i=1,2$) normal vector fields along $N$. Define an orthonormal frame of tangent vector fields along $N$ by $U_i=-JW_i$, $i=1,2$. Since $H$ is connected, the isometries in $H$ are holomorphic. Hence $U_i=h_*U_i$ for all $h\in H$, $i=1,2$. Moreover, if we denote by $\bar{\nabla}$ and $\nabla$  the Levi-Civita connections of $\bar{M}^2(c)$ and $N$ respectively, and by $\II$ the second fundamental form of $N$, we have:
\[
0=\nabla^\perp_X W_i=(\bar{\nabla}_X JU_i)^\perp=(J\bar{\nabla}_X U_i)^\perp=(J(\nabla_X U_i+ \II(X,U_i)))^\perp=J\nabla_X U_i,
\]
where $X$ is a tangent vector field to $N$, and we have used the fact that $\bar{M}^2(c)$ is a K\"ahler manifold. Hence, $\{U_1, U_2\}$ is a parallel orthonormal frame of tangent vector fields to $N$, which implies that $N$ is flat. On the other hand, for all $h\in H$ and all $i,j\in \{1,2\}$, we have
\[
h_*\II(U_i,U_j)=h_*(\bar{\nabla}_{U_i} U_j)^\perp=(h_*\bar{\nabla}_{U_i} U_j)^\perp=(\bar{\nabla}_{h_*U_i} h_*U_j)^\perp=\II(U_i,U_j).
\]
Then, the mean curvature vector field $\mathcal{H}=\sum_{i=1}^2\II(U_i,U_i)$ is $H$-equivariant and, hence, $\nabla^\perp$-parallel. In fact, for all $i,j,k\in \{1,2\}$ we have:
\[
(\nabla^\perp_{U_i}\II)(U_j,U_k)=\nabla^\perp_{U_i}\II(U_j,U_k)
-\II(\nabla_{U_i}U_j,U_k)-\II(U_j,\nabla_{U_i}U_k)=0,
\]
which implies that $N$ has parallel second fundamental form.

Now we show the converse. Let $N$ be a Lagrangian, flat, $2$-dimensional submanifold of $\bar{M}^2(c)$ with parallel mean curvature. If $N$ is minimal, it follows from the flatness of $N$ and the work~\cite{KM00} by Kenmotsu and Masuda, that $c>0$ and $N$ is an open part of a Clifford torus in $\C P^2$, that is, a minimal principal orbit of the canonical action of the group $H=U(1)\times U(1)\times U(1)$ on $\C P^2$, which is polar with cohomogeneity two.

From now on we assume that $N$ has nonzero parallel mean curvature. If $c>0$, it follows from the works of Ogata~\cite{Og95}, Kenmotsu and Zhou~\cite{KZ00}, and Hirakawa~\cite{Hi06}, that every totally real, flat surface of $\C P^2$ with nonzero parallel mean curvature is an open part of a flat torus in $\C P^2$, which, again, is a principal orbit of the polar action of $H=U(1)\times U(1)\times U(1)$. Let now $c<0$. In this case, it follows from the works of Kenmotsu and Zhou~\cite{KZ00}, and Hirakawa~\cite[Theorem~2]{Hi04} that $N$ is an open part of an orbit of the connected subgroup of $U(1,2)$ with Lie algebra $\R X\oplus\R \hat{Y}$, with
\[
X=
\small
\begin{pmatrix}
 0 & \sqrt{-c} & \sqrt{-c} \\
 \sqrt{-c} & 0 & 2 \sqrt{2} r+e^{-i s} r' \\
 \sqrt{-c} & -2 \sqrt{2} r-e^{i s} r' & 0
\end{pmatrix}\!
,
\hat{Y}=i
\small
\begin{pmatrix}
 0 & \sqrt{-c} & -\sqrt{-c} \\
 -\sqrt{-c} & 4 \sqrt{2} r & e^{-i s} r'-2 \sqrt{2} r \\
 \sqrt{-c} & e^{i s} r'-2 \sqrt{2} r & 4 \sqrt{2} r
\end{pmatrix}\!,
\]
where $r$ and $s$ are real numbers, and $r'=\sqrt{c+8r^2}\in \R$. Equivalently, $N$ is an open part of an orbit of the connected subgroup $H$ of $SU(1,2)$ with Lie algebra $\g{h}=\R X\oplus\R Y$, where we define $Y=\hat{Y}-ir\frac{8\sqrt{2}}{3}\id\in \g{g}=\g{su}(1,2)$, and where $\id$ denotes the identity matrix. Consider the standard Cartan involution $\theta$ on $\g{g}=\g{su}(1,2)$ defined by $\theta(X)=-X^*$, where $X^*$ denotes the conjugate transpose of a complex matrix. If we define $\g{k}=\{X\in\g{g}:\theta X=X\}$ and $\g{p}=\{X\in\g{g}:\theta X=-X\}$, we obtain a Cartan decomposition $\g{g}=\g{k}\oplus\g{p}$ of the simple Lie algebra $\g{g}$. Moreover, this decomposition is orthogonal with respect to the inner product on $\g{g}$ defined by $\langle X, Y\rangle=-B(\theta X, Y)$, where $X$, $Y\in \g{g}$, and $B$ is the Killing form of $\g{g}$. Note also that $\g{k}=\g{s}(\g{u}(1)\oplus\g{u}(2))\subset\g{su}(1,2)$ and $\g{p}$ consists of all matrices
\[
\begin{pmatrix}
    0 & \bar z_1 &  \bar z_2 \\
    z_1 & 0  & 0 \\
     z_2 & 0 & 0 \\
\end{pmatrix}\in \g{su}(1,2), \quad\textnormal{ with } z_1,z_2\in \C.
\]
Moreover, if $o\in \C H^2$ is the only fixed point of the group $K=S(U(1)U(2))$, then the differential of the map $g\in G=SU(1,2)\to g(o)\in\C H^2$ induces an isomorphism between $\g{p}$ and the tangent space $T_o\C H^2$. Define the two-dimensional totally real subspace $\g{s}$ of $\g{p}$ given by the relation $z_2=-\bar{z}_1$. Now we define the totally geodesic submanifold $\Sigma=\{(\Exp X) (o): X\in\g{s} \}$ of $\C H^2$, where $\Exp$ is the Lie group exponential map. In particular, the isomorphism between $T_o\C H^2$ and $\g{p}$ allows us to identify $T_o\Sigma$ with $\g{s}$. In order to conclude the proof, we will show that the action of $H$ on $\C H^2$ is polar with section $\Sigma$. Note first that, since $\g{h}$ is abelian and $G=SU(1,2)$ has rank $2$, $H$ is a closed subgroup of $G=SU(1,2)$.  According to \cite[Corollary~3.2]{BD11a}, it is sufficient to show that $T_o\Sigma\subset \nu_o(H\cdot o)$, $T_o\Sigma$ is a section for the slice representation of $H_o=\{h\in H:h(o)=o\}$ on $\nu_o(H\cdot o)$, and $\langle [\g{s},\g{s}],\g{h}\rangle=0$. It is straightforward to check that $\langle \g{s},\g{h}\rangle=0$, which implies that $T_o\Sigma$ is orthogonal to $T_o(H\cdot o)$. The second condition is also clearly satisfied, since the cohomogeneity of the slice representation coincides with the cohomogeneity of the $H$-action, which is $2$. Finally, again some elementary calculations show that $\langle [\g{s},\g{s}],\g{h}\rangle=0$. This proves that the action of $H$ on $\C H^2$ is polar, and hence $N$ is an open part of the principal orbit $H\cdot o$ of the action of $H$.
\end{proof}

\begin{remark}
Submanifolds with parallel second fundamental form are also an interesting topic of research nowadays. Naitoh tackled in~\cite{Na83a} and~\cite{Na83b} the classification problem of parallel submanifolds of complex space forms. An alternative proof of Theorem~\ref{th:principal} could be attempted using this classification.
\end{remark}

\subsection{Construction}\label{subsec:construction}

We proceed now with the construction of the new hypersurfaces with two principal curvatures in the nonflat complex space form $\bar{M}^2(c)$, $c\neq 0$. From now on, we fix a group $H$ of isometries of $\bar{M}^2(c)$ acting polarly and with cohomogeneity two on $\bar{M}^2(c)$. Let $\Sigma$ be a section, and $\Sigma_{reg}$ the set of regular points of $\Sigma$ for the $H$-action.

Let $\gamma\colon t\in (-\varepsilon, \varepsilon)\mapsto \gamma(t)\in \Sigma_{reg}$ be a curve in the regular part of $\Sigma$. Then, the subset
\[
H\cdot \gamma = \{h(\gamma(t)): t\in(-\varepsilon, \varepsilon), \, h\in H\}
\]
is a $3$-dimensional hypersurface in $\bar{M}^2(c)$ that is foliated orthogonally by equidistant $H$-orbits, and by the curves $h\circ \gamma\colon t\in (-\varepsilon, \varepsilon)\mapsto (h\circ \gamma)(t)=h(\gamma(t))\in \Sigma_{reg}$.
Our purpose is to determine which curves $\gamma$ give rise to hypersurfaces with exactly two principal curvatures at every point.

The first observation is that the principal curvatures of any principal $H$-orbit with respect to any nonzero normal vector are always different, that is, there are exactly two. This follows from the characterization in Theorem~\ref{th:principal} and from the explicit expression of the shape operator of Lagrangian, flat submanifolds with parallel mean curvature (see \cite[p.~299]{KZ00} or \cite[p.~232]{Hi06}; the expressions are also valid for the minimal case, cf.~\cite{KM00},~\cite{LOY75}):
\[
S_{e_3}=\begin{pmatrix}
r+\cos(s)\frac{r'}{2\sqrt{2}} & \sin(s)\frac{r'}{2\sqrt{2}}
\\
\sin(s)\frac{r'}{2\sqrt{2}} & 3r-\cos(s)\frac{r'}{2\sqrt{2}}
\end{pmatrix},\quad
S_{e_4}=\begin{pmatrix}
-\sin(s)\frac{r'}{2\sqrt{2}} & r+\cos(s)\frac{r'}{2\sqrt{2}}
\\
r+\cos(s)\frac{r'}{2\sqrt{2}} & \sin(s)\frac{r'}{2\sqrt{2}}
\end{pmatrix},
\]
where $\{e_3,e_4\}$ is an orthonormal basis of the normal space, $r$ and $s$ are real numbers (that depend only on the principal orbit), and $r'=\sqrt{c+8r^2}\in \R$. If we take a generic unit normal vector $\eta=\cos(\theta)e_3+\sin(\theta)e_4$, some elementary calculations show that the eigenvalues of the shape operator with respect to $\eta$ are
\[
2 r \cos (\theta )\pm\sqrt{\frac{\left(r'\right)^2}{8}+r^2-\frac{r r'}{\sqrt{2}} \cos (s+2 \theta )}.
\]
Since $c\neq 0$, it is not difficult to show that these principal curvatures are always different for every value of $r$, $s$, and $\theta$.

Fix now $p\in\Sigma_{reg}$, and $w\in T_p\Sigma_{reg}$. Consider a (locally defined) curve $\gamma$ in $\Sigma_{reg}$, parametrized by arc-length, and such that $\gamma(0)=p$, and $\dot{\gamma}(0)=w$. Fix a unit vector field $\xi$ along $\gamma$ tangent to $\Sigma_{reg}$, and such that $\langle \xi(t),\dot{\gamma}(t)\rangle=0$ for all $t$ where $\gamma$ is defined. Thus, $\xi(t)$ is a unit normal vector field to $H\cdot\gamma$ along $\gamma$. Consider also a local chart $\mathcal{U}$ for $\Sigma_{reg}$ around $p$, with coordinates $(x_1,x_2)$. Let $\alpha,\,\beta\colon T\mathcal{U}\to \R$ be the principal curvature functions of the principal orbits of $H$ intersecting $\mathcal{U}$ at the intersection points. As explained above, we know that $\alpha(\eta)\neq \beta(\eta)$ for any vector $\eta\in T\mathcal{U}$, and thus, $\alpha$ and $\beta$ are smooth functions.

%The tangent space  $T_{h(\gamma(t))}(H\cdot \gamma)$, $h\in H$, is spanned by $h_*\dot{\gamma}(t)$ and $T_{h(\gamma(t)}(H\cdot\gamma(t))$.
The shape operator of $H\cdot \gamma$ at $\gamma(t)$ with respect to the unit normal vector $\xi(t)$ has the following eigenvalues:
\[
\alpha(\xi(t)),\ \beta(\xi(t)), \text{ and }
\langle S_{\xi(t)} \dot{\gamma}(t), \dot{\gamma}(t)\rangle=-\langle \bar{\nabla}_{\dot{\gamma}(t)}\xi, \dot{\gamma}(t)\rangle=
\langle \bar{\nabla}_{\dot{\gamma}(t)}\dot{\gamma}(t), \xi \rangle.
\]
This last eigenvalue is precisely the curvature of the curve $\gamma$ in $\bar{M}^2(c)$, or equivalently, since $\Sigma$ is totally geodesic, the curvature of $\gamma$ (with respect to the orientation determined by the normal field $\xi$) as a curve in $\Sigma$.

Hence, $H\cdot \gamma$ will have two principal curvatures at the points of $\gamma$ if and only if the curvature of $\gamma$ (with respect to $\xi$) as a curve in $\Sigma$ coincides with one of the two functions  $\alpha(\xi(t))$ or $\beta(\xi(t))$. If we write $\gamma$ in local coordinates as $\gamma(t)=(x_1(t),x_2(t))$, we have:
\begin{align*}
\bar{\nabla}_{\dot{\gamma}}\dot{\gamma}&=x_1''\partial_1
+x_1'\bar{\nabla}_{\dot{\gamma}}\partial_1+x_2''\partial_2
+x_1'\bar{\nabla}_{\dot{\gamma}}\partial_2\\
&=(x_1''+f(x_1,x_2,x_1',x_2'))\partial_1+(x_2''+g(x_1,x_2,x_1',x_2'))\partial_2,
\end{align*}
where $f$, $g$ are smooth functions depending on $x_1$, $x_2$, $x_1'$, $x_2'$ and the Christoffel symbols of~$\Sigma$. The fact that the curvature of $\gamma$ coincides with $\alpha(\xi(t))$ means that $\bar{\nabla}_{\dot{\gamma}}\dot{\gamma}=\alpha(\xi(t))\xi_{\gamma(t)}$. Hence, this condition is equivalent to the existence of smooth functions $F_\alpha$ and $G_\alpha$ such that
\begin{align*}
x_1''&=F_\alpha(x_1,x_2,x_1',x_2'),&
x_2''&=G_\alpha(x_1,x_2,x_1',x_2').
\end{align*}
This is a second order system of ordinary differential equations that has a unique solution for given initial conditions $\gamma(0)=p$, and $\dot{\gamma}(0)=w$. Therefore, the hypersurface $H\cdot \gamma$ has two principal curvatures at the points of $\gamma$ if and only if $\gamma$ is a solution to one of the two possible systems of ODEs constructed as explained above. A completely analogous argument applies for $\beta$ instead of $\alpha$, and it is obvious that the two possible choices that we have for $\gamma$, depending on whether we choose $\alpha$ or $\beta$ to be the principal curvature with multiplicity two, provide indeed two different curves, say $\gamma_\alpha\neq \gamma_\beta$, and thus, two different hypersurfaces $H\cdot\gamma_\alpha$ and $H\cdot\gamma_\beta$.
%$\beta(\xi(t))\xi_{\gamma(t)}=
%\bar{\nabla}_{\dot{\gamma}}\dot{\gamma}=\alpha(\xi(t))\xi_{\gamma(t)}$ %implies $\alpha=\beta$.

We still have to check that $H\cdot \gamma$ as constructed above has two principal curvatures at all points, not only along $\gamma$. An $H$-equivariant unit normal vector field to $H\cdot \gamma$ is given by $h_*\xi(t)$, for any $h\in H$ and any possible $t$. Note that this is a well-defined vector field because $H\cdot\gamma(t)$ is a principal orbit.
Since $H$ acts by isometries of $\bar{M}^2(c)$, the principal curvatures of $H\cdot \gamma$ at  $h(\gamma(t))$ with respect to $h_*\xi(t)$ are the same as the principal curvatures at $\gamma(t)$ with respect to $\xi(t)$.
We conclude that $H\cdot \gamma$ has exactly two principal curvatures at every point.

Finally, we have to show that the examples we have just constructed are indeed new, or equivalently, that their two principal curvatures are nonconstant. Fix a point $p\in\Sigma_{reg}$. Then we know that for every unit $w\in T_p\Sigma_{reg}$, there is a locally defined curve $\gamma_w$ such that $\gamma_w(0)=p$, $\dot{\gamma}_w(0)=w$, and $H\cdot \gamma_w$ has two principal curvatures. In fact, there are exactly two such curves, but the argument that follows applies to any of them. Assume that there is a collection of vectors $\g{w}$ in the unit sphere $S^1(T_p\Sigma_{reg})$ of $T_p\Sigma_{reg}$ generating $T_p\Sigma_{reg}$ and such that the real hypersurfaces $H\cdot \gamma_w$ are Hopf at $p$ for each $w\in\g{w}$. We will get a contradiction with this assumption. For each hypersurface $H\cdot\gamma_w$, $w\in\g{w}$, let $\xi_w$ be a unit normal vector field along $H\cdot \gamma_w$, which we know is $H$-equivariant along the principal $H$-orbits that foliate $H\cdot \gamma_w$. Note that the subindex $w$ in $\xi_w$ only denotes that the normal vector field depends on the initial value $w$ for $\dot{\gamma}_w$; in particular, $\langle \dot{\gamma}_w(t),(\xi_w )_{\gamma_w(t)}\rangle=0$ for each possible $t$. The assumption that $H\cdot \gamma_w$ is Hopf at~$p$ means that $(J\xi_w)_p$ is an eigenvector of the shape operator of $H\cdot\gamma_w$, and hence, $(J\xi_w)_p$ is also an eigenvector of the shape operator $S_{(\xi_w)_p}$ of the principal orbit $H\cdot p$ with respect to the normal vector $(\xi_w)_p$. In particular, the map
\[
w\in S^1(T_p\Sigma_{reg})\mapsto \langle S_{(\xi_w)_p} (J\xi_w)_p, Jw\rangle\in \R
\]
vanishes in $\g{w}$. Since this map is the restriction of a linear map to $S^1(T_p\Sigma_{reg})$ and $\g{w}$ generates $T_p\Sigma_{reg}$, it vanishes identically, which means that $(J\xi_w)_p$ is an eigenvector of $S_{(\xi_w)_p}$ for every unit $w\in T_p\Sigma_{reg}$. Since $Jw$ is perpendicular to $(J\xi_w)_p$ and $H\cdot\gamma_w$ is $2$-dimensional, we have that $Jw$ is an eigenvector of $S_{(\xi_w)_p}$ for each $w$ as well. But now, if we fix any $w$ and take unit normal vectors $\xi=(\xi_w)_p$ and $\eta=w$ at $p$, then $\{J\xi,Jw\}$ is a common basis of eigenvectors for the shape operators $S_\xi$ and $S_\eta$ of the principal orbit $H\cdot p$ at $p$ with respect to $\xi$ and~$\eta$. This means that the shape operators $S_\xi$ and $S_\eta$ commute. Using this and the fact that the principal orbit $H\cdot p$ has flat normal bundle~\cite[Corollary~3.2.5]{BCO03}, the Ricci equation of $H\cdot p$ applied to $J\xi$, $Jw$, $\xi$, and $\eta$ reads
\[
0=\langle R^\perp(J\xi,Jw)\xi,\eta\rangle=\langle\bar{R}(J\xi,Jw)\xi,\eta\rangle
+\langle[S_\xi,S_\eta]J\xi,J\eta\rangle=-\frac{c}{4},
\]
where $R^\perp$ is the normal curvature tensor of $H\cdot p$. This gives the desired contradiction. Therefore, the real hypersurfaces $H\cdot \gamma_w$ are Hopf at $p$ at most for two collinear vectors of $S^1(T_p\Sigma_{reg})$. So, generically, our hypersurfaces are non-Hopf. But since they have two principal curvatures, these cannot be constant, because all hypersurfaces in $\bar{M}^2(c)$ with two constant principal curvatures are Hopf, as follows from their well-known classification result (see~\cite{BD06} and~\cite{Ki86}).

Thus, we have proved the first part of our Main Theorem.

\begin{theorem}
Let $\bar{M}^2(c)$ be a complex space form of complex dimension $2$ and constant holomorphic curvature $c\neq 0$. Consider a polar action of a group $H$ acting with cohomogeneity two and with section $\Sigma$ on $\bar{M}^2(c)$.

Then, for any regular point $p\in \Sigma$ and any unit tangent vector $w\in T_p\Sigma$, there are exactly two different locally defined unit speed curves $\gamma_i\colon(-\epsilon,\epsilon)\to\Sigma$, $i=1,2$, with $\gamma_i(0)=p$ and $\dot{\gamma}_i(0)=w$, such that the set $H\cdot \gamma_i=\{h(\gamma_i(t)):h\in H,\, t\in(-\epsilon,\epsilon)\}$ is a real hypersurface with two principal curvatures in $\bar{M}^2(c)$. Generically, such hypersurface is non-Hopf and with nonconstant principal curvatures.
\end{theorem}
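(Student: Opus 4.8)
\emph{Proof proposal.} The plan is to analyse $H\cdot\gamma$ through its shape operator along $\gamma$ and to reduce the requirement of having exactly two principal curvatures to an ordinary differential equation for $\gamma$. Fix a regular point $p\in\Sigma$ and a unit vector $w\in T_p\Sigma$. Given a unit speed curve $\gamma$ in the regular part of $\Sigma$ with $\gamma(0)=p$, $\dot{\gamma}(0)=w$, and a unit vector field $\xi(t)\in T_{\gamma(t)}\Sigma$ with $\xi(t)\perp\dot{\gamma}(t)$, the field $\xi$ is a unit normal field of $H\cdot\gamma$ along $\gamma$, because $T_q\bar{M}^2(c)=T_q\Sigma\oplus T_q(H\cdot q)$ at regular points. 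The decisive point is that, at $\gamma(t)$, the shape operator $S_{\xi(t)}$ of $H\cdot\gamma$ has three eigenvalues: the two principal curvatures $\alpha(\xi(t))$ and $\beta(\xi(t))$ of the principal orbit $H\cdot\gamma(t)$ with respect to $\xi(t)$, and the extra eigenvalue $\langle\bar{\nabla}_{\dot{\gamma}}\dot{\gamma},\xi\rangle$ on $\dot{\gamma}$, which is the curvature of $\gamma$ in $\Sigma$. By Theorem~\ref{th:principal}, $H\cdot\gamma(t)$ is a Lagrangian flat surface with parallel mean curvature, and from the explicit shape operators of such surfaces one checks, using $c\neq0$, that $\alpha(\xi(t))\neq\beta(\xi(t))$ always. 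Hence $H\cdot\gamma$ has exactly two principal curvatures along $\gamma$ precisely when the curvature of $\gamma$ in $\Sigma$, with respect to $\xi$, equals $\alpha(\xi(t))$ or $\beta(\xi(t))$.

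I would then express each of the two conditions in a local chart $(x_1,x_2)$ of the regular part of $\Sigma$: writing $\gamma(t)=(x_1(t),x_2(t))$, the equation $\bar{\nabla}_{\dot{\gamma}}\dot{\gamma}=\alpha(\xi(t))\,\xi_{\gamma(t)}$, and likewise the one with $\beta$, becomes a second order system $x_i''=F_i(x_1,x_2,x_1',x_2')$ with smooth right-hand sides, built from the Christoffel symbols of $\Sigma$ and from the smooth functions $\alpha,\beta$ on the unit normal bundle of the orbits. By the theorem of existence and uniqueness of solutions of such systems, each equation has a unique local solution with $\gamma(0)=p$, $\dot{\gamma}(0)=w$; the two solutions $\gamma_\alpha$ and $\gamma_\beta$ are genuinely different, being distinguished by which of $\alpha,\beta$ is the double principal curvature, and hence $H\cdot\gamma_\alpha\neq H\cdot\gamma_\beta$. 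To pass from $\gamma$ to all of $H\cdot\gamma$, I would use that $h_*\xi(t)$ is a well-defined unit normal field of $H\cdot\gamma$, precisely because the orbits involved are principal, and that, $H$ acting by isometries, the shape operator at $h(\gamma(t))$ with respect to $h_*\xi(t)$ is conjugate to $S_{\xi(t)}$, hence has the same eigenvalues.

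For the genericity statements I would argue by contradiction. Suppose the hypersurfaces $H\cdot\gamma_w$ are Hopf at $p$ for a collection of unit vectors $\g{w}\subset T_p\Sigma$ spanning $T_p\Sigma$. Then $(J\xi_w)_p$ is an eigenvector of the shape operator $S_{(\xi_w)_p}$ of the orbit $H\cdot p$, so the map $w\mapsto\langle S_{(\xi_w)_p}(J\xi_w)_p,Jw\rangle$ vanishes on $\g{w}$; being the restriction to the unit circle of a linear map, it vanishes identically, and one deduces that for every unit $w$ the shape operators $S_\xi,S_\eta$ of $H\cdot p$ with $\xi=(\xi_w)_p$ and $\eta=w$ share the eigenbasis $\{J\xi,Jw\}$, hence commute. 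Inserting this, together with the flatness of the normal bundle of $H\cdot p$ provided by Theorem~\ref{th:principal}, into the Ricci equation of $H\cdot p$ applied to $J\xi,Jw,\xi,\eta$ gives $0=\langle\bar{R}(J\xi,Jw)\xi,\eta\rangle=-c/4$, contradicting $c\neq0$. So $H\cdot\gamma_w$ can be Hopf at $p$ for at most the two collinear directions lying on a line of $T_p\Sigma$, whence the generic such hypersurface is non-Hopf. Combined with the classical fact that every real hypersurface of $\bar{M}^2(c)$ with two constant principal curvatures is Hopf (see \cite{BD06} and \cite{Ki86}), a non-Hopf hypersurface with two principal curvatures has nonconstant ones.

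The main obstacle I anticipate is the genericity argument, and within it the step that upgrades ``Hopf for a spanning set of directions'' to ``Hopf for all directions'' together with the extraction of the contradiction from the Ricci equation: this is where $\dim\Sigma=2$ and $c\neq0$ become essential, and the only place where the specific curvature tensor of a complex space form, rather than that of a general K\"ahler manifold, really enters. By contrast, verifying $\alpha\neq\beta$ everywhere and carrying out the ODE reduction are, modulo Theorem~\ref{th:principal}, routine: the former is an explicit eigenvalue computation with the parallel-mean-curvature shape operators, and the latter a direct rewriting in coordinates.
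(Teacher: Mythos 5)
Your proposal is correct and follows essentially the same route as the paper's own construction in Section~\ref{sect:construction}: the same identification of the three eigenvalues of the shape operator along $\gamma$, the same reduction to a second order ODE system in a chart of $\Sigma$, the same equivariance argument to propagate the two-curvature property over the whole orbit, and the same Ricci-equation contradiction for the genericity claims. No gaps; the only cosmetic difference is that you invoke Theorem~\ref{th:principal} for the flatness of the normal bundle where the paper cites \cite{BCO03} directly.
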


%%%%%%%%%%%%%%%%%%%%%%%%%%%%%%%%%%%%%%%%%%%%%%%%%%%%%%%%%%%%%%%%%%%%%%%%%%%%%%%

\section{Setup}\label{sect:setup}

From now on we are devoted to the classification part of the Main Theorem. We settle the notation in this section, and continue with the proof in the following sections.

Recall that we denote by $\bar{M}^2(c)$ a nonflat complex space form of complex dimension $2$ and constant holomorphic curvature $c\neq 0$. We denote by $\langle\,\cdot\,,\,\cdot\,\rangle$ the metric of $\bar{M}^2(c)$, by $J$ its complex structure, by $\bar{\nabla}$  its Levi-Civita connection, and by $\bar{R}$ its curvature tensor.

Let $M$ be a real hypersurface of $\bar{M}^2(c)$, that is, a submanifold of $\bar{M}^2(c)$ with real codimension $1$. The calculations that follow are local, so we may assume that $M$ is orientable. Thus, let $\xi$ be a unit normal vector field along $M$. The vector field $J\xi$ is tangent to $M$, and is called the \emph{Hopf} or \emph{Reeb vector field} of the hypersurface.

The Levi-Civita connection of $M$ is denoted by $\nabla$, and is determined by the Gauss formula
\[
\bar{\nabla}_X Y=\nabla_X Y+\langle S X,Y\rangle\xi,
\]
where $X$ and $Y$ are tangent vector fields along $M$, and $S$ denotes the shape operator of~$M$.

The shape operator is a self-adjoint endomorphism with respect to the metric of $M$, and thus it can be diagonalized with real eigenvalues. These eigenvalues are called the \emph{principal curvatures} of $M$, the corresponding eigenspaces are the principal curvature spaces, and the corresponding eigenvectors are the principal curvature vectors. Recall that $M$ is said to be a \emph{Hopf hypersurface} if $J\xi$ is a principal curvature vector field.

A real hypersurface of $\bar{M}^2(c)$ has, generically, three distinct principal curvatures. Tashiro and Tachibana~\cite{TT63} proved that there are no umbilical real hypersurfaces (that is, real hypersurfaces with exactly one principal curvature) in nonflat complex spaces forms. If $n\geq 3$, Cecil and Ryan~\cite{CR82} in $\C P^n$, and Montiel~\cite{Mo85} in~$\C H^n$ proved that real hypersurfaces with two distinct constant principal curvatures are open parts of homogeneous Hopf hypersurfaces.
The methods used by Cecil and Ryan, and Montiel, do not work if $n=2$, and we have indeed provided in Section~\ref{sect:construction} examples of real hypersurfaces in $\bar{M}^2(c)$ with two distinct nonconstant principal curvatures. The classification of real hypersurfaces with two distinct \emph{constant} principal curvatures in $\bar{M}^2(c)$ is not very difficult to obtain and can be found in~\cite{Ta75a} for $\C P^2$ as a particular case of the classification in arbitrary dimensions, and in~\cite{BD06}. Again, all the examples are open parts of homogeneous Hopf real hypersurfaces. The classification of Hopf real hypersurfaces with constant principal curvatures in nonflat complex space forms is due to Kimura~\cite{Ki86} in $\C P^n$, and to Berndt~\cite{Be89} in $\C H^n$.
\medskip

Therefore, our interest in this paper is to study non-Hopf real hypersurfaces with two nonconstant distinct principal curvatures in $\C P^2$ and $\C H^2$. Thus, let $\alpha$ and $\beta$ be the two distinct principal curvatures of a real hypersurface $M$. In a neighborhood of a point, $\alpha$ and $\beta$ will still be different functions, and therefore, the distributions $T_\alpha$ and $T_\beta$ consisting of the principal curvature spaces associated with $\alpha$ and $\beta$ respectively, are well-defined smooth distributions on this open neighborhood. By a similar continuity argument, in a maybe smaller neighborhood, the vector field $J\xi$ will not be a principal curvature vector at any point, and hence $M$ will be a non-Hopf real hypersurface in that neighborhood. In what follows we simply suppose that $M$ denotes that neighborhood where $\alpha$ and $\beta$ are different and $J\xi$ is not a principal curvature vector at any point. We will also assume without loss of generality that $\dim T_\alpha=1$, and $\dim T_\beta=2$. By $\Gamma(T_\alpha)$ and $\Gamma(T_\beta)$ we denote the sections of $T_\alpha$ and $T_\beta$, that is, the smooth vector fields on $M$ that are, at each point, principal curvature vectors associated with $\alpha$ and $\beta$ respectively.

The aim of this section is to prove the following algebraic result:

\begin{proposition}\label{prop:algebra}
There exist unit smooth vector fields $U\in\Gamma(T_\alpha)$, and $V$, $A\in\Gamma(T_\beta)$ with $V$ orthogonal to $A$, and positive smooth functions $a$, $b\colon M\to\R$ with $a^2+b^2=1$, such that
\begin{align*}
J\xi &= aU+bV, & JU  &= -bA-a\xi,\\
JA &= bU-aV, & JV  &= aA-b\xi.
\end{align*}
\end{proposition}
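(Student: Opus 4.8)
The statement is purely algebraic: at each point $p\in M$ we have a $3$-dimensional tangent space with an almost-contact-type structure coming from $J$ and $\xi$, and we want a nice orthonormal frame adapted to the principal curvature decomposition. I would set up coordinates using the non-Hopf hypothesis. Since $M$ is non-Hopf, $J\xi$ is not a principal curvature vector; decompose $J\xi$ into its $T_\alpha$- and $T_\beta$-components. Because $\dim T_\alpha=1$, pick a unit vector field $U\in\Gamma(T_\alpha)$ so that the $T_\alpha$-component of $J\xi$ is $aU$ with $a>0$ (shrinking $M$ and orienting $U$); then the $T_\beta$-component is $bV$ for a unit $V\in\Gamma(T_\beta)$ and $b>0$, and $a^2+b^2=1$ since $\|J\xi\|=1$. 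The genuine content is that $a,b$ are nowhere $0$: $b=0$ would make $J\xi=\pm U\in T_\alpha$ principal (contradiction), and $a=0$ would make $J\xi=\pm V\in T_\beta$ principal (contradiction). Then define $A\in\Gamma(T_\beta)$ to be the unit vector field in $T_\beta$ orthogonal to $V$, with sign to be fixed below; $\{U,V,A\}$ is then an orthonormal frame of $TM$, all smooth since $\alpha\neq\beta$.

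**Deriving the $J$-action formulas.** With $J\xi=aU+bV$ in hand, the remaining three formulas are forced by linear algebra. First, $JU\in T_pM\oplus\R\xi$. Its $\xi$-component is $\langle JU,\xi\rangle=-\langle U,J\xi\rangle=-a$. Its tangential part is orthogonal to $U$ since $\langle JU,U\rangle=0$, so it lies in $\mathrm{span}\{V,A\}$; compute $\langle JU,V\rangle=-\langle U,JV\rangle$ and $\langle JU,A\rangle=-\langle U,JA\rangle$, which at first glance looks circular, so instead use $J^2=-\id$ together with $J\xi=aU+bV$: applying $J$ gives $-\xi=a\,JU+b\,JV$, and pairing successively with $\xi$, $U$, $V$, $A$ will pin down all the coefficients once one more relation is supplied. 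The clean way is: take $JV$, note its $\xi$-component is $\langle JV,\xi\rangle=-\langle V,J\xi\rangle=-b$ and its tangential part is orthogonal to $V$, hence of the form $pU+qA$; then $-\xi=a\,JU+b\,JV$ determines $JU$ in terms of $JV$. Imposing $\|JU\|=\|JV\|=1$ and $\langle JU,JV\rangle=\langle U,V\rangle=0$ (since $J$ is an isometry) yields, after choosing the sign of $A$ appropriately, exactly $JV=aA-b\xi$, $JU=-bA-a\xi$, and then $JA=bU-aV$ follows from $-A=J(JA)$ or directly from $\langle JA,\cdot\rangle=-\langle A,J\cdot\rangle$ against the frame. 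In other words, the four displayed identities are the unique (up to the sign of $A$) consistent completion of $J\xi=aU+bV$ to an orthogonal almost complex structure on $\R^4=TM\oplus\R\xi$, and one verifies $J^2=-\id$ on $\{U,V,A,\xi\}$ to check internal consistency.

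**The one genuine obstacle.** The linear-algebra bookkeeping is routine, but I must make sure that $A$ (hence the sign choice) can be taken \emph{smoothly} and \emph{consistently with $U$}: the sign of $A$ is not free once $U$ is fixed, because the formula $JA=bU-aV$ must hold with the \emph{same} $U$ appearing in $J\xi=aU+bV$. Concretely, $A$ must be chosen as (a positive multiple of) $JU+a\xi$ divided by $b$ — equivalently $A=-\frac1b(JU+a\xi)$, which is automatically a smooth unit vector field in $T_\beta$ orthogonal to $V$ provided $b\neq0$; one then checks this $A$ is indeed in $T_\beta$ by verifying $SA=\beta A$, using that $JU$ and $\xi$ and the shape operator interact through the Gauss and Codazzi equations, or more simply by dimension count: $JU+a\xi\perp U$, and its component along $\xi$ is accounted for, leaving a vector in $T_\beta=\mathrm{span}\{V,A_{\text{any}}\}$ that is moreover $\perp V$ because $\langle JU+a\xi,V\rangle=\langle JU,V\rangle=-\langle U,JV\rangle$ and $JV$ has no $U$-component by construction. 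So the only real care needed is to \emph{define} $A$ via $U$ rather than picking it independently; once that is done, smoothness is inherited from $U$, $\xi$, $J$, and $b$, positivity of $a,b$ is the non-Hopf condition, and $a^2+b^2=1$ is $\|J\xi\|=1$. Everything else is unwinding $J^2=-\id$ and $\langle JX,JY\rangle=\langle X,Y\rangle$ on the frame $\{U,V,A,\xi\}$.
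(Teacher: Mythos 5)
Your proposal is correct and follows essentially the same route as the paper: decompose $J\xi$ into its $T_\alpha$- and $T_\beta$-components (using the non-Hopf hypothesis to get $a,b>0$), take $A\in\Gamma(T_\beta)$ orthogonal to $V$ with sign fixed by the requirement $JU=-bA-a\xi$, and derive the remaining identities from $J^2=-\id$ and the fact that $J$ is an isometry. Your explicit formula $A=-\tfrac{1}{b}(JU+a\xi)$ is just a cleaner packaging of the paper's sign adjustment, and your worry about verifying $SA=\beta A$ is unnecessary for exactly the dimension-count reason you give.
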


\begin{remark}\label{remark:phi}
Sometimes it will be more convenient to write $a=\cos\phi$, $b=\sin\phi$, for a smooth function $\phi\colon M\to (0,\pi/2)$.
\end{remark}

\begin{proof}
Since $J\xi$ is a unit vector field tangent to $M$, we can write $J\xi=a U+b V$, where $U\in\Gamma(T_\alpha)$, $V\in\Gamma(T_\beta)$ are unit vectors, and $a$, $b\colon M\to\R$ are smooth functions satisfying $a^2+b^2=1$, and $a,b>0$. Since $T_\beta$ is $2$-dimensional, we consider a unit vector field $A\in\Gamma(T_\beta)$ that is orthogonal to $V$. Then, $\{U,V,A\}$ forms an orthonormal basis of $TM$ at each point.

 Since $-\xi=J^2\xi=aJU+bJV$, and $a\neq 0$, taking inner product with $V$ it follows that $\langle JU,V\rangle=0$. This implies that $JU$, $JV\in\spann\{A,\xi\}$. Now, $\langle JU,\xi\rangle=-\langle U,J\xi \rangle=-a$, and since $U$ is a unit vector field, it follows that $\langle JU,A\rangle=\pm b$. By reversing the sign of $A$ if necessary, we may assume that $JU=-bA-a\xi$. A similar argument shows that $JV=aA-b\xi$. Finally, the previous expressions yield $\langle JA,U\rangle=b$, $\langle JA,V\rangle=-a$, and $\langle JA,\xi\rangle=0$, from where the result follows.
\end{proof}

%%%%%%%%%%%%%%%%%%%%%%%%%%%%%%%%%%%%%%%%%%%%%%%%%%%%%%%%%%%%%%%%%%%%%%%%%%%%%%%

\section{Levi-Civita connection}\label{sect:Levi-Civita}

Let $M$ be a real hypersurface of $\bar{M}^2(c)$, $c\neq 0$, with two distinct principal curvatures that is not Hopf at any point, and assume the notation given above by Proposition~\ref{prop:algebra} and Remark~\ref{remark:phi}. The aim of this section is to determine the Levi-Civita connection of $M$. This is established by the following proposition, whose proof will be given in the rest of this section.

\begin{proposition}\label{prop:Levi-Civita}
The Levi-Civita connection of $M$ in terms of the basis $\{U,V,A\}$ is given by the following equations:
\begin{align*}
\nabla_U U  &=-\frac{b(c-4\alpha(\alpha-\beta))}{4a(\alpha-\beta)}A,&
\nabla_U V  &=\frac{c}{4(\alpha-\beta)}A,\\
\nabla_V U  &=\frac{c}{4(\alpha-\beta)}A,&
\nabla_V V  &=-\frac{a(c-4\alpha(\alpha-\beta))}{4b(\alpha-\beta)}A,\\
\nabla_A U  &=-\frac{(a^2-2b^2)c}{4(\alpha-\beta)}V,&
\nabla_A V  &=\frac{(a^2-2b^2)c}{4(\alpha-\beta)}U,\\
\nabla_U A  &=\frac{b(c-4\alpha(\alpha-\beta))}{4a(\alpha-\beta)}U
    -\frac{c}{4(\alpha-\beta)}V,&
\nabla_V A  &=-\frac{c}{4(\alpha-\beta)}U
    +\frac{a(c+4\beta(\alpha-\beta))}{4b(\alpha-\beta)}V,\\[2ex]
\nabla_A A  &=0.\\
\end{align*}
Furthermore, we have $U\phi=V\phi=U\alpha=V\alpha=U\beta=V\beta=0$,
and
\begin{equation}\label{eq:differentialEquation}
\begin{aligned}
A\phi
&=\beta+\frac{c(1-3\sin^2\phi)}{4(\alpha-\beta)},\\
A\alpha &=\frac{1}{4}\bigl(c(2-3\sin^2\phi)+4\alpha(\alpha-\beta)\bigr)\tan\phi,\\
A\beta
&=-\frac{3c}{8}\sin 2\phi.
\end{aligned}
\end{equation}
\end{proposition}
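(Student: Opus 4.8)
The plan is to run the standard integrability machinery for a real hypersurface of a K\"ahler manifold inside the adapted orthonormal frame $\{U,V,A\}$ of Proposition~\ref{prop:algebra}, writing $a=\cos\phi$, $b=\sin\phi$ as in Remark~\ref{remark:phi}. Since $\{U,V,A\}$ is orthonormal, $\nabla$ is encoded by the nine functions $\langle\nabla_{X}Y,Z\rangle$ with $X\in\{U,V,A\}$ and $\{Y,Z\}$ a two-element subset of $\{U,V,A\}$; together with the nine directional derivatives $X\phi$, $X\alpha$, $X\beta$ these eighteen functions are exactly what must be determined. The ambient input is the Gauss formula $\bar\nabla_{X}Y=\nabla_{X}Y+\langle SX,Y\rangle\xi$, the Weingarten formula $\bar\nabla_{X}\xi=-SX$, the K\"ahler condition $\bar\nabla J=0$, the Codazzi equation $(\nabla_{X}S)Y-(\nabla_{Y}S)X=-\bar{R}(X,Y)\xi$ (whose right-hand side is automatically tangent to $M$ for tangent $X,Y$, as one checks from the curvature formula of $\bar{M}^{2}(c)$ recalled in Section~\ref{sect:construction}), and the Gauss equation. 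Throughout, $SU=\alpha U$, $SV=\beta V$, $SA=\beta A$.

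First I would extract the consequences of $\bar\nabla J=0$. Differentiating the relations of Proposition~\ref{prop:algebra} and using Gauss--Weingarten gives the familiar identity $\nabla_{X}(J\xi)=-P(SX)$, where $PX$ denotes the tangential part of $JX$ (so $PU=-bA$, $PV=aA$, $PA=bU-aV$). Evaluating this for $X=U,V,A$ and reading off components yields, on one hand, $\langle\nabla_{U}U,V\rangle=-U\phi$, $\langle\nabla_{V}U,V\rangle=-V\phi$, $\langle\nabla_{A}U,V\rangle=\beta-A\phi$, and, on the other hand, three scalar relations coming from the $A$-components (for instance $a\langle\nabla_{U}U,A\rangle+b\langle\nabla_{U}V,A\rangle=\alpha b$, and analogues along $V$ and $A$). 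The dual identity $(\nabla_{X}P)Y=\langle SX,Y\rangle J\xi-\langle Y,J\xi\rangle SX$ turns out to be redundant with these here, so the real first-order input will be Codazzi.

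Next I would feed the Codazzi equation into the three pairs $(U,V)$, $(U,A)$, $(V,A)$, using the simplifications $(\nabla_{X}S)U=(X\alpha)U+(\alpha-\beta)\nabla_{X}U$ and $(\nabla_{X}S)V=(X\beta)V+(\beta-\alpha)\langle\nabla_{X}V,U\rangle U$ (and the analogue with $A$ in place of $V$), together with the explicit value of $-\bar R(X,Y)\xi$. Reading off components gives: $V\beta=0$ and $A\beta=-\tfrac{3c}{8}\sin 2\phi$; $\langle\nabla_{V}U,A\rangle=\tfrac{c}{4(\alpha-\beta)}$ and $\langle\nabla_{A}U,V\rangle=-\tfrac{(a^{2}-2b^{2})c}{4(\alpha-\beta)}$, which combined with the previous paragraph produces the expression for $A\phi$ in~\eqref{eq:differentialEquation}; the relations $V\alpha=-(\alpha-\beta)U\phi$, $U\beta=-(\alpha-\beta)V\phi$, $\langle\nabla_{A}U,A\rangle=U\beta/(\alpha-\beta)$; and the relation $(\alpha-\beta)\langle\nabla_{U}U,A\rangle-A\alpha=-\tfrac{3c}{8}\sin 2\phi$ linking $\langle\nabla_{U}U,A\rangle$ to $A\alpha$. (One of the nine scalar components comes out as an identity automatically implied by the others, a useful consistency check.) Merging these with the three $A$-component relations of the first step, everything is now expressed through $\alpha,\beta,a,b,c$ except for four quantities: $U\phi$, $V\phi$, $U\alpha$, and (say) $A\alpha$.

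The third step — and the main obstacle — is to close the system by proving $U\phi=V\phi=U\alpha=0$ and determining $A\alpha$. For this I would invoke the Gauss equation $\langle R(X,Y)Z,W\rangle=\langle\bar R(X,Y)Z,W\rangle+\langle SX,W\rangle\langle SY,Z\rangle-\langle SX,Z\rangle\langle SY,W\rangle$, computing the left-hand side intrinsically from the connection coefficients found above — this brings in their directional derivatives, hence $U\alpha$, $U\phi$, $V\phi$ through $a$, $b$ and $\alpha-\beta$ — and matching against the explicit right-hand side for enough index combinations (e.g.\ $(U,V,V,U)$, $(U,A,A,U)$, $(V,A,A,V)$, $(U,A,A,V)$). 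Combined with the first-order relations above and, where convenient, with the torsion identities $[X,Y]=\nabla_{X}Y-\nabla_{Y}X$ applied to $\phi,\alpha,\beta$, this yields the four missing scalar equations, forcing $U\phi=V\phi=U\alpha=0$ (whence $V\alpha=U\beta=0$, $\langle\nabla_{A}U,A\rangle=\langle\nabla_{A}V,A\rangle=0$, hence $\nabla_{A}A=0$) and the stated value of $A\alpha$. Substituting back into the expressions from the first two steps gives the full connection table, and the three derivatives $A\phi,A\alpha,A\beta$ are exactly~\eqref{eq:differentialEquation}. I expect essentially all the weight of the proof to sit in this last step — the bookkeeping of the Gauss-equation components, including the derivatives of the connection coefficients — while the earlier steps, though lengthy, are mechanical linear algebra in the frame.
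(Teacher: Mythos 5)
Your frame-based setup and your first two steps are essentially the paper's own: the Codazzi equation applied to the pairs $(U,V)$, $(U,A)$, $(V,A)$ together with the K\"ahler identity $\bar\nabla J=0$ (the paper differentiates $\langle A,J\xi\rangle=0$ rather than $J\xi$ itself, but the resulting relations --- e.g.\ $a\langle\nabla_UU,A\rangle+b\langle\nabla_UV,A\rangle=\alpha b$, $V\beta=0$, $A\beta=-\tfrac{3abc}{4}$, $\langle\nabla_VU,A\rangle=\tfrac{c}{4(\alpha-\beta)}$ --- are the same), and your reduction to the four residual unknowns $U\phi$, $V\phi$, $U\alpha$, $A\alpha$ matches the paper's intermediate expression of the full connection in terms of $V\alpha$, $U\beta$, $A\alpha$. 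Up to that point the proposal is correct.

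The gap is in the closing step, which is exactly where you concede all the weight sits and which you only assert. Two concrete problems. First, the paper does not close the system primarily with the Gauss equation: it applies the consistency identities $([X,Y]-XY+YX)\beta=0$ for the three pairs of frame fields --- cheap, because $V\beta$ and $A\beta$ are already known explicitly --- and invokes the Gauss equation only once, for the single component $(U,V,U,A)$, and only \emph{after} $U\beta=V\alpha=0$ and $A\alpha$ have been established, so that no uncontrolled second derivatives of $\alpha$ and $\beta$ appear. Your plan to evaluate several Gauss components intrinsically \emph{before} those vanishings are known produces terms such as $U(U\beta)$ and $U(V\alpha)$ that must themselves be eliminated via the commutator identities; you have neither exhibited the resulting system nor verified that it determines the four unknowns. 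Second, and more seriously, the equations one actually obtains are not of the form (nonvanishing coefficient)$\times$(unknown)$=0$: the identity $([A,V]-AV+VA)\beta=0$ yields $\tfrac{c(1+7a^2-5b^2)}{4(\alpha-\beta)}\,U\beta=0$, and the factor $1+7a^2-5b^2$ may vanish at some points. The paper must differentiate this identity once more in the $V$-direction at such points, obtaining $\tfrac{6abc}{(\alpha-\beta)^2}(U_p\beta)^2=0$ and hence $U_p\beta=0$ there, and then repeat the trick for $V\alpha$. Your proposal gives no reason to expect the Gauss-equation system to be free of the same degeneracy, so the claim that it ``forces'' $U\phi=V\phi=U\alpha=0$ is unsubstantiated until this case analysis (or a demonstrably nondegenerate linear system) is supplied.
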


Now we proceed with the proof of Proposition~\ref{prop:Levi-Civita}. It basically follows from the equations of a hypersurface in a complex space form, but the calculations that lead to it are long. In order to make things easier for the reader we will divide this proof in several steps.

We start by using the Codazzi equation of a hypersurface to get a first expression for the Levi-Civita connection $\nabla$ of $M$. Recall that for arbitrary vector fields $X$, $Y$, and $Z$ tangent to $M$, the Codazzi equation is written as
\[
\langle\bar{R}(X,Y)Z,\xi\rangle=
\langle(\nabla_X S)Y,Z\rangle-\langle(\nabla_Y S)X,Z\rangle.
\]

Let us apply the Codazzi equation to the triple $(U,A,U)$. Using the fact that $U$ and $A$ are orthogonal eigenvectors of $S$ associated with the eigenvalues $\alpha$ and $\beta$ respectively, and the symmetry of $S$ with respect to the inner product, we get
\begin{align*}
\langle(\nabla_U S)A,U\rangle
&{}=\langle\nabla_U SA-S\nabla_UA,U\rangle
=\langle\nabla_U(\beta A),U\rangle-\langle \nabla_UA,SU\rangle\\
&{}=(U\beta)\langle A,U\rangle+\beta\langle\nabla_UA,U\rangle
-\alpha\langle\nabla_UA,U\rangle
=(\alpha-\beta)\langle\nabla_UU,A\rangle.
\end{align*}
Since $U$ is a unit vector field we have $\langle\nabla_AU,U\rangle=0$. Taking this into account and proceeding as before we get $\langle(\nabla_A S)U,U\rangle=A\alpha$. Finally, the expression of the curvature tensor of a complex space form yields $\langle\bar{R}(U,A)U,\xi\rangle=-3abc/4$. Altogether this implies
\[
\langle\nabla_UU,A\rangle=\frac{1}{\alpha-\beta}
\left(A\alpha-\frac{3abc}{4}\right).
\]

Applying the Codazzi equation to the triples $(U,V,U)$, $(U,V,V)$, $(U,V,A)$, $(U,A,U)$, $(U,A,V)$, and $(U,A,A)$, we obtain in a similar way:
\begin{equation}\label{eq:nablas1}
\begin{aligned}
\langle\nabla_U U,V\rangle &{}=\frac{V\alpha}{\alpha-\beta},
&\langle\nabla_V U,V\rangle &{}=\frac{U\beta}{\alpha-\beta},
&\langle\nabla_V U,A\rangle &{}=\frac{c}{4(\alpha-\beta)},\\
\langle\nabla_U U,A\rangle &{}=\frac{1}{\alpha-\beta}\Bigl(A\alpha-\frac{3abc}{4}\Bigr),
&\langle\nabla_A U,V\rangle &{}=\frac{c(2b^2-a^2)}{4(\alpha-\beta)},
&\langle\nabla_A U,A\rangle &{}=\frac{U\beta}{\alpha-\beta}.
\end{aligned}
\end{equation}

Since $J$ is parallel with respect to the connection $\bar\nabla$, we have $\bar{\nabla}_U J\xi=J\bar{\nabla}_U\xi=-JSU=-\alpha JU$. Taking this into account, and using Proposition~\ref{prop:algebra} and~\eqref{eq:nablas1} we get
\begin{align*}
0\!&=U\langle A,J\xi\rangle=\langle\bar{\nabla}_U A,J\xi\rangle+\langle A,\bar{\nabla}_U J\xi\rangle
=a\langle\nabla_U A,U\rangle+b\langle\nabla_U A,V\rangle+\alpha b\langle A,A\rangle+\alpha a\langle A,\xi\rangle\\
&=-\frac{a}{\alpha-\beta}\Bigl(A\alpha-\frac{3abc}{4}\Bigr)+b\langle\nabla_U A,V\rangle+\alpha b,
\end{align*}
from where we obtain $\langle\nabla_U A,V\rangle$. This, and an analogous argument with $V\langle A,J\xi\rangle=0$ and $A\langle A,J\xi\rangle=0$, give the three expressions
\begin{equation}\label{eq:nablas2}
\begin{aligned}
\langle\nabla_U V,A\rangle &{}=\alpha-\frac{a}{b(\alpha-\beta)}\Bigl(A\alpha-\frac{3abc}{4}\Bigr),
&\langle\nabla_V V,A\rangle &{}=-\frac{a}{b}\Bigl(\beta+\frac{c}{4(\alpha-\beta)}\Bigr),\\
\langle\nabla_A V,A\rangle &{}=-\frac{aU\beta}{b(\alpha-\beta)}.
\end{aligned}
\end{equation}

Therefore, \eqref{eq:nablas1} and~\eqref{eq:nablas2} imply
\begin{equation}\label{eq:Levi-Civita1}
\begin{gathered}
\begin{aligned}
\nabla_U U  &=\frac{V\alpha}{\alpha-\beta}V+\frac{1}{\alpha-\beta}\Bigl(A\alpha-\frac{3abc}{4}\Bigr)A,
&\nabla_V U  &=\frac{U\beta}{\alpha-\beta}V+\frac{c}{4(\alpha-\beta)}A,\\
\nabla_A U  &=\frac{c(2b^2-a^2)}{4(\alpha-\beta)}V+\frac{U\beta}{\alpha-\beta}A,
&\nabla_A V  &=-\frac{c(2b^2-a^2)}{4(\alpha-\beta)}U-\frac{aU\beta}{b(\alpha-\beta)}A,\\
\nabla_V A  &=-\frac{c}{4(\alpha-\beta)}U+\frac{a}{b}\Bigl(\beta+\frac{c}{4(\alpha-\beta)}\Bigr)V,
&\nabla_A A  &=-\frac{U\beta}{\alpha-\beta}U+\frac{aU\beta}{b(\alpha-\beta)}V,
\end{aligned}\\
\begin{aligned}
\nabla_U V  &=-\frac{V\alpha}{\alpha-\beta}U+
\Bigl(\alpha-\frac{a}{b(\alpha-\beta)}\Bigl(A\alpha-\frac{3abc}{4}\Bigr)\Bigr)A,\\
\nabla_V V  &=-\frac{U\beta}{\alpha-\beta}U-\frac{a}{b}\Bigl(\beta+\frac{c}{4(\alpha-\beta)}\Bigr)A,\\
\nabla_U A  &=-\frac{1}{\alpha-\beta}\Bigl(A\alpha-\frac{3abc}{4}\Bigr)U
-\Bigl(\alpha-\frac{a}{b(\alpha-\beta)}\Bigl(A\alpha-\frac{3abc}{4}\Bigr)\Bigr)V.
\end{aligned}
\end{gathered}
\end{equation}

The next step is to calculate the derivatives of the functions $a$ and $b$ in terms of the derivatives of $\alpha$ and $\beta$. For example, using Proposition~\ref{prop:algebra} and~\eqref{eq:nablas2} we get
\[
Aa=A\langle U,J\xi\rangle=\langle\bar{\nabla}_A U,J\xi\rangle+\langle U,\bar{\nabla}_A J\xi\rangle
=b\langle\nabla_A U,V\rangle-\beta\langle U,JA\rangle
=\frac{bc(2b^2-a^2)}{4(\alpha-\beta)}-b\beta.
\]
We directly give the results for the other derivatives, whose calculations are similar to those of $Aa$:
\begin{equation}\label{eq:ab}
\begin{aligned}
Ua  &=\frac{bV\alpha}{\alpha-\beta},
&Va &=\frac{bU\beta}{\alpha-\beta},
&Aa &=\frac{bc(2b^2-a^2)}{4(\alpha-\beta)}-b\beta,\\
Ub  &=-\frac{aV\alpha}{\alpha-\beta},
&Vb &=-\frac{aU\beta}{\alpha-\beta},
&Ab &=-\frac{ac(2b^2-a^2)}{4(\alpha-\beta)}+a\beta.
\end{aligned}
\end{equation}

Now we calculate the derivatives of $\alpha$ and $\beta$. We start with $\beta$. First, the Codazzi equation implies
\begin{align*}
0   &=  \langle\bar{R}(V,A)A,\xi\rangle
=\langle(\nabla_V S)A,A\rangle-\langle(\nabla_A S)V,A\rangle\\
&=\langle \nabla_V (\beta A)-S\nabla_V A,A\rangle
-\langle \nabla_A(\beta V)-S\nabla_A V,A\rangle\\
&=V\beta+\beta\langle\nabla_V A,A\rangle-\beta\langle\nabla_V A,A\rangle
-\beta\langle\nabla_A V,A\rangle+\beta\langle\nabla_A V,A\rangle=V\beta.
\end{align*}
Similarly, using the expression for the curvature tensor together with Proposition~\ref{prop:algebra} we obtain
\begin{align*}
\frac{3abc}{4}
&=  \langle\bar{R}(V,A)V,\xi\rangle
=\langle(\nabla_V S)A,V\rangle-\langle(\nabla_A S)V,V\rangle\\
&=\langle \nabla_V (\beta A)-S\nabla_V A,V\rangle
-\langle \nabla_A(\beta V)-S\nabla_A V,V\rangle=-A\beta.
\end{align*}
Thus, we have $V\beta=0$, and $A\beta=-{3abc}/{4}$.

The next step is to calculate $U\beta$, but this will take some effort. Using~\eqref{eq:Levi-Civita1}, $V\beta=0$, and $A\beta=-{3abc}/{4}$, we get
\begin{align*}
[A,V]\beta &=(\nabla_A V-\nabla_V A)\beta
=-\frac{c(2b^2-a^2)}{4(\alpha-\beta)}U\beta+\frac{3abc}{4}\frac{a}{b(\alpha-\beta)}U\beta
+\frac{c}{4(\alpha-\beta)}U\beta\\
&=\frac{c(1+4a^2-2b^2)}{4(\alpha-\beta)}U\beta.
\end{align*}

On the other hand, since $V\beta=0$, it is clear that $AV\beta=0$, and using $A\beta=-3abc/4$ and~\eqref{eq:ab}, we also obtain
\[
VA\beta=-\frac{3c}{4}V(ab)=-\frac{3c}{4}((Va)b+a(Vb))
=\frac{3c(a^2-b^2)}{4(\alpha-\beta)}U\beta.
\]
Taking all this together we have
\begin{equation}\label{eq:Ubeta}
0=([A,V]-AV+VA)\beta=\frac{c(1+7a^2-5b^2)}{4(\alpha-\beta)}U\beta.
\end{equation}

Assume momentarily that there exists a point $p\in M$ such that $1+7a(p)^2-5b(p)^2=0$. Taking the derivative with respect to $V$ in the previous equation and evaluating at $p$ yields
\begin{align*}
0&=V_p\Bigl(\frac{c(1+7a^2-5b^2)}{4(\alpha-\beta)}U\beta\Bigr)\\
&=\frac{c}{4}\Bigl(\frac{V(1+7a^2-5b^2)}{\alpha-\beta}U\beta
-(1+7a^2-5b^2)\frac{V(\alpha-\beta)}{(\alpha-\beta)^2}U\beta
+\frac{1+7a^2-5b^2}{\alpha-\beta}VU\beta\Bigr)\Bigr\vert_p\\
&=\frac{c}{4(\alpha(p)-\beta(p))}V_p(1+7a^2-5b^2)(U_p\beta).
\end{align*}
Now, using~\eqref{eq:ab} we get
\[
0=\frac{c}{4(\alpha(p)-\beta(p))}\Bigl(14a\frac{b}{\alpha-\beta}
+10b\frac{a}{\alpha-\beta}\Bigr)(p)(U_p\beta)^2
=\frac{6a(p)b(p)c}{(\alpha(p)-\beta(p))^2}(U_p\beta)^2,
\]
and since by assumption $a,b\neq0$, we finally get $U_p\beta=0$. Therefore, \eqref{eq:Ubeta} implies $U\beta=0$ and thus we have
\begin{align}\label{eq:beta}
U\beta=V\beta  &=0,    &   A\beta  &=-\frac{3abc}{4}.
\end{align}

Now, by~\eqref{eq:Levi-Civita1} and $U\beta=V\beta=0$, it follows that $[A,U]\beta=0$, and $AU\beta=0$. Hence, by~\eqref{eq:beta} and~\eqref{eq:ab},
\[
0=([A,U]-AU+UA)\beta=U\Bigl(-\frac{3abc}{4}\Bigr)=-\frac{3c(b^2-a^2)}{4(\alpha-\beta)}V\alpha.
\]
Arguing as before, if $p\in M$ is such that $b(p)^2-a(p)^2=0$, then using again~\eqref{eq:ab} we get
\[
0=U_p\Bigl(-\frac{3c(b^2-a^2)}{4(\alpha-\beta)}V\alpha\Bigr)
=\frac{3a(p)b(p)c}{(\alpha(p)-\beta(p))^2}(V_p\alpha)^2,
\]
and the two previous equations readily imply $V\alpha=0$ on $M$.

Equations~\eqref{eq:Levi-Civita1} and~\eqref{eq:beta} yield
\[
0=([U,V]-UV+VU)\beta=-\frac{3abc}{4}\Bigl(
\Bigl(\alpha-\frac{a}{b(\alpha-\beta)}\Bigl(A\alpha-\frac{3abc}{4}\Bigr)\Bigr)
-\frac{c}{4(\alpha-\beta)}\Bigr),
\]
from where we can obtain $A\alpha$. Indeed, we have
\begin{align}\label{eq:alpha}
V\alpha &=0, &   A\alpha &=\frac{b}{4a}\Bigl(c(3a^2-1)+4\alpha(\alpha-\beta)\Bigr).
\end{align}

In order to obtain $U\alpha$ we need the Gauss equation:
\[
\langle\bar{R}(X,Y)Z,W\rangle=\langle{R}(X,Y)Z,W\rangle
+\langle SX,Z\rangle\langle SY,W\rangle-\langle SX,W\rangle\langle SY,Z\rangle,
\]
where $X$, $Y$, $Z$ and $W$ are vector fields tangent to $M$, and $R$ is the curvature tensor of~$M$.

Thus, let us use the Gauss equation for the tuple $(U,V,U,A)$. Proposition~\ref{prop:algebra} implies $\langle\bar{R}(U,V)U,A\rangle=0$, and $\langle SU,U\rangle\langle SV,A\rangle-\langle SU,A\rangle\langle SV,U\rangle=0$.

Using~\eqref{eq:Levi-Civita1} together with~\eqref{eq:beta} and~\eqref{eq:alpha} we get
\[
\langle\nabla_U\nabla_V U,A\rangle
=\langle\nabla_U\Bigl(\frac{c}{4(\alpha-\beta)}A\Bigr),A\rangle
=U\Bigl(\frac{c}{4(\alpha-\beta)}\Bigr)
=-\frac{c U(\alpha-\beta)}{4(\alpha-\beta)^2}
=-\frac{c}{4(\alpha-\beta)^2}U\alpha.
\]

Now, substituting~\eqref{eq:alpha} in~\eqref{eq:Levi-Civita1} yields
\[
\nabla_U U=-\frac{b(c-4\alpha(\alpha-\beta))}{4a(\alpha-\beta)}A.
\]
Taking into account that $V\alpha=V\beta=Va=Vb=0$ by equations~\eqref{eq:alpha},~\eqref{eq:beta} and~\eqref{eq:ab}, we get
\[
\langle\nabla_V\nabla_U U,A\rangle
=\langle\nabla_V\Bigl(-\frac{b(c-4\alpha(\alpha-\beta))}{4a(\alpha-\beta)}A\Bigr),A\rangle
=V\Bigl(-\frac{b(c-4\alpha(\alpha-\beta))}{4a(\alpha-\beta)}\Bigr)=0.
\]

Finally, since $U\beta=V\alpha=0$, equation~\eqref{eq:Levi-Civita1} implies that $[U,V]$ is a multiple of $A$. Since $\langle\nabla_A U,A\rangle=0$, it readily follows that $\langle\nabla_{[U,V]}U,A\rangle=0$.

Altogether this means that the Gauss equation is equivalent to
\[
-\frac{c}{4(\alpha-\beta)^2}U\alpha=0,
\]
from where it follows that $U\alpha=0$.

Putting together all the results of this section, and taking into account that $a=\cos\phi$, $b=\sin\phi$,  we obtain Proposition~\ref{prop:Levi-Civita}, as desired.

%%%%%%%%%%%%%%%%%%%%%%%%%%%%%%%%%%%%%%%%%%%%%%%%%%%%%%%%%%%%%%%%%%%%%%%%%%%%%%%

\section{Two perpendicular integrable distributions}\label{sect:distributions}

In this section we will study the properties of two integrable distributions: the one generated by $A$, and the distribution generated by the vector fields $U$ and $V$. This information will be used in Section~\ref{sect:proof} to finish the proof of the Main Theorem. We assume the notation and results obtained so far, and summarized in propositions~\ref{prop:algebra} and~\ref{prop:Levi-Civita}.

We start with the distribution $\mathcal{D}$ generated by the vector fields $U$ and $V$.

\begin{proposition}\label{prop:distribution}
The distribution $p\mapsto\mathcal{D}_p=\spann\{U_p,V_p\}$ of $M$ is integrable, and its leaves are flat, totally real submanifolds of $\bar{M}^2(c)$ with parallel second fundamental form and flat normal bundle.
\end{proposition}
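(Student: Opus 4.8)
The plan is to extract everything from the explicit formulas for the Levi-Civita connection in Proposition~\ref{prop:Levi-Civita} together with the algebraic identities of Proposition~\ref{prop:algebra}. First I would establish integrability of $\mathcal{D}=\spann\{U,V\}$: from the connection formulas we read off $\nabla_U V-\nabla_V U=[U,V]$, and since $\nabla_U V=\frac{c}{4(\alpha-\beta)}A-(\ldots)U$ and $\nabla_V U=\frac{c}{4(\alpha-\beta)}A$ both have their $A$-components equal, the bracket $[U,V]$ lies in $\spann\{U,V\}=\mathcal{D}$; hence $\mathcal{D}$ is involutive and, by Frobenius, integrable. Fix a leaf $L$; its tangent bundle is $\mathcal{D}$ and a unit normal frame of $L$ in $\bar{M}^2(c)$ is $\{A,\xi\}$.

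Next I would check that $L$ is totally real (equivalently Lagrangian, as noted in Subsection~\ref{subsec:polar}): by Proposition~\ref{prop:algebra}, $JU=-bA-a\xi$ and $JV=aA-b\xi$, so $JU$ and $JV$ are both orthogonal to $U$ and $V$, i.e.\ $JT_pL\perp T_pL$; this is exactly the totally real condition for a surface in a complex surface. For flatness I would compute the intrinsic curvature of $L$ using the induced connection $\nabla^L$, whose components are the $\mathcal{D}$-parts of $\nabla_U U$, $\nabla_U V$, $\nabla_V U$, $\nabla_V V$ from Proposition~\ref{prop:Levi-Civita}; since $U\phi=V\phi=U\alpha=V\alpha=U\beta=V\beta=0$, all the coefficient functions appearing there are constant along $L$, and a direct Gauss-curvature computation (or simply the observation that $U$ and $V$ generate a connection with constant coefficients in this frame, whose curvature one computes from $[\nabla^L_U,\nabla^L_V]-\nabla^L_{[U,V]}$) shows the intrinsic curvature of $L$ vanishes; alternatively, and more cleanly, I would invoke Theorem~\ref{th:principal}: once $L$ is shown to be Lagrangian with parallel mean curvature (see below), flatness follows automatically. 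For the normal bundle, I would compute the normal connection $\nabla^\perp$ of $L$ in $\bar{M}^2(c)$ in the frame $\{A,\xi\}$: using the Gauss formula of $M$ in $\bar{M}^2(c)$, $\bar\nabla_U A$ and $\bar\nabla_V A$ have their $\mathcal{D}^\perp$-components read off from the $A$-coefficient of $\nabla_U A,\nabla_V A$ together with $\langle SU,A\rangle=\langle SV,A\rangle=0$ and $\bar\nabla_X\xi=-SX$; since $U,V\in T_\alpha\oplus T_\beta$ are principal and $SU=\alpha U$, $SV=\beta V$ are tangent to $L$, one finds $\nabla^\perp_U A=\nabla^\perp_V A=0$, hence $R^\perp=0$.

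For the mean curvature and the parallelism of $\II_L$, I would write down the second fundamental form $\II_L$ of $L$ in $\bar{M}^2(c)$ explicitly: its components with respect to $\{A,\xi\}$ are $\langle\II_L(X,Y),A\rangle=\langle\bar\nabla_X Y,A\rangle$ (the $A$-parts of $\nabla_U U,\nabla_U V,\nabla_V V$ from Proposition~\ref{prop:Levi-Civita}) and $\langle\II_L(X,Y),\xi\rangle=\langle SX,Y\rangle$ (so $\alpha,0,\beta$ on the pairs $(U,U),(U,V),(V,V)$). The mean curvature vector is then $\mathcal{H}_L=\II_L(U,U)+\II_L(V,V)$, whose coefficients are combinations of $\alpha,\beta,\phi,c$; since every one of these functions is constant along $L$ (by the $U\phi=V\phi=\dots=0$ relations) and the frame $\{A,\xi\}$ is $\nabla^\perp$-parallel along $L$, $\mathcal{H}_L$ is parallel, and likewise each component of $\II_L$ is constant in a parallel normal frame while the tangent frame $\{U,V\}$ — being parallel for $\nabla^L$ by inspection of the $\mathcal{D}$-parts of the connection — makes $\nabla^L\II_L=0$, i.e.\ $\II_L$ is parallel. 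I expect the main obstacle to be purely bookkeeping: one must carefully separate the three-dimensional connection $\nabla$ of $M$ into the induced connection $\nabla^L$, the second fundamental form $A$-direction, and the $\xi$-direction (using the Gauss formula of $M$ itself), and verify that after this split the relevant frames are genuinely parallel — this is where sign errors and dropped terms are most likely. Once parallelism of $\II_L$ is in hand, flatness and flat normal bundle also follow a posteriori from Theorem~\ref{th:principal}, which provides a useful consistency check.
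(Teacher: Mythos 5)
Your main line of argument is essentially the paper's own proof: from Proposition~\ref{prop:Levi-Civita} one reads off that $\nabla_U V=\nabla_V U$, so in fact $[U,V]=0$; that the $\mathcal{D}$-components of $\nabla_U U$, $\nabla_U V$, $\nabla_V U$, $\nabla_V V$ all vanish, so $\{U,V\}$ is a parallel tangent frame on a leaf $L$ and $L$ is flat; that $\{A,\xi\}$ is a $\nabla^\perp$-parallel normal frame, so $\nu L$ is flat; and that the constancy of $\alpha$, $\beta$, $\phi$ along $L$ then gives $\nabla\II=0$ (the totally real property being immediate from Proposition~\ref{prop:algebra}). One caveat: your proposed shortcut of deducing flatness (or the other conclusions) ``a posteriori'' from Theorem~\ref{th:principal} is circular, since flatness is part of hypothesis (ii) of that theorem rather than a consequence of being Lagrangian with parallel mean curvature, so you should rely on the direct computation --- where the relevant connection coefficients are not merely constant along $L$ but identically zero, which is what actually forces the intrinsic curvature to vanish.
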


\begin{proof}
From Proposition~\ref{prop:Levi-Civita} it follows, using the Gauss formula, that
\begin{equation}\label{eq:distribution}
\begin{aligned}
\bar{\nabla}_U U  &=-\frac{b(c-4\alpha(\alpha-\beta))}{4a(\alpha-\beta)}A+\alpha\xi,&
\bar{\nabla}_V V  &=-\frac{a(c-4\alpha(\alpha-\beta))}{4b(\alpha-\beta)}A+\beta\xi,\\
\bar{\nabla}_U V  &=\bar{\nabla}_V U =\frac{c}{4(\alpha-\beta)}A.
\end{aligned}
\end{equation}
From here it immediately follows that $[U,V]=0$, and thus, the distribution $\mathcal{D}$ is integrable.

Let $L$ be any integral submanifold of this distribution, and denote by $\widetilde{\nabla}$ its Levi-Civita connection. The normal space $\nu_p L$ of $L$ at $p\in L$, as a submanifold of $\bar{M}^2(c)$, is generated by $A_p$ and $\xi_p$. Hence, using~\eqref{eq:distribution} we get
$\widetilde{\nabla}_U U=\widetilde{\nabla}_U V=\widetilde{\nabla}_V U=\widetilde{\nabla}_V V=0$. Therefore, $U$ and $V$ are parallel vector fields on $L$ with respect to the Levi-Civita connection $\widetilde{\nabla}$. In particular, $L$ is flat as a $2$-dimensional Riemannian manifold.

Let $\nabla^\perp$ denote the normal connection of $L$ as a submanifold of $\bar{M}^2(c)$. Since $\bar\nabla_U\xi=-SU=-\alpha U$, and $\bar\nabla_V\xi=-SV=-\beta V$, if follows that $\nabla^\perp_U\xi=\nabla^\perp_V\xi=0$. On the other hand, the formulas for $\nabla_U A$ and $\nabla_V A$ in Proposition~\ref{prop:Levi-Civita} yield $\nabla^\perp_U A=\nabla^\perp_V A=0$. Therefore, the normal bundle of $L$ is flat, and indeed, $A$ and $\xi$ are parallel normal vector fields along $L$.

Finally, Proposition~\ref{prop:Levi-Civita} implies that the functions $\phi$, $\alpha$, $\beta$, and hence also $a$ and $b$, are constant along the integral curves of $U$ and $V$. Therefore, they are constant along $L$, and hence their derivatives vanish. This, the fact that $U$ and $V$ are parallel with respect to $\widetilde\nabla$, and the fact that $A$ and $\xi$ are parallel with respect to $\nabla^\perp$, imply that the second fundamental form of $L$ is parallel.
\end{proof}

Thus, the real hypersurface $M$ is foliated orthogonally by the leaves of the $2$-dimensional distribution $\mathcal{D}=\spann\{U,V\}$, and by the integral curves of the vector field $A$. We now provide more information on the integral curves of $A$.

\begin{proposition}\label{prop:gamma}
Let $\gamma$ be an integral curve of $A$ through a point $p\in M$. Let $Q_p=\exp_p(\R A_p\oplus\R\xi_p)$, where $\exp_p$ denotes the Riemannian exponential map of $\bar{M}^2(c)$ at $p$. Then, $Q_p$ is a totally real, totally geodesic submanifold of $\bar{M}^2(c)$, and $\gamma$ is contained in $Q_p$.

Moreover, the curve $\gamma$ is determined by the initial conditions $\gamma(0)=p$, $\dot\gamma(0)=A_p$, and the fact that $\gamma$ is a unit speed curve in $Q_p=\exp_p(\R A_p\oplus\R\xi_p)$ with curvature $\tilde\beta$ with respect to $\xi$, where $\tilde\beta$ is a function determined by the differential equation
\begin{align*}
\tilde\phi'
&=\tilde\beta+\frac{c(1-3\sin^2\tilde\phi)}{4(\tilde\alpha-\tilde\beta)},\\
\tilde\alpha' &=\frac{1}{2}\bigl(c(2-3\sin^2\tilde\phi)
+4\tilde\alpha(\tilde\alpha-\tilde\beta)\bigr)\tan\tilde\phi,\\
\tilde\beta'
&=-\frac{3c}{8}\sin 2\tilde\phi.
\end{align*}
with initial conditions $\tilde\phi(0)=\phi(p)$, $\tilde\alpha(0)=\alpha(p)$, and $\tilde\beta(0)=\beta(p)$.
\end{proposition}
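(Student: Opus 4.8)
The plan is to establish Proposition~\ref{prop:gamma} in two stages: first identify the totally geodesic surface $Q_p$ containing the integral curve $\gamma$ of $A$, and then characterize $\gamma$ inside $Q_p$ via its curvature function. For the first stage, I would argue that $Q_p=\exp_p(\R A_p\oplus\R\xi_p)$ is totally real and totally geodesic as follows. Totally geodesic submanifolds of $\bar M^2(c)$ through $p$ correspond to curvature-invariant subspaces of $T_p\bar M^2(c)$; since $\{A_p,\xi_p\}$ spans a subspace on which $J$ acts by sending $A_p$ to a vector orthogonal to both $A_p$ and $\xi_p$ (indeed $JA=bU-aV$ and $J\xi=aU+bV$ both lie in $\mathcal D$), the plane $\R A_p\oplus\R\xi_p$ is totally real, and totally real planes are curvature-invariant for the complex space form curvature tensor given in Section~\ref{sect:construction}. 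Hence $Q_p$ is a totally real, totally geodesic $\R H^2$ or $\R P^2$. To see that $\gamma\subset Q_p$, I would use the explicit Levi-Civita connection: from Proposition~\ref{prop:Levi-Civita} we have $\nabla_A A=0$, so $\bar\nabla_{\dot\gamma}\dot\gamma=\bar\nabla_A A=\langle SA,A\rangle\xi=\beta\xi$ by the Gauss formula, which shows that the acceleration of $\gamma$ always lies in the span of $\xi$; combined with $\dot\gamma=A$ and the fact, again from Proposition~\ref{prop:Levi-Civita} together with the expressions for $JA$ and $J\xi$, that the plane field $\spann\{A,\xi\}$ along $\gamma$ is parallel in $\bar M^2(c)$ (one checks $\bar\nabla_A A$, $\bar\nabla_A\xi=-\beta A$ stay in this plane), it follows that $\gamma$ stays in the totally geodesic surface tangent to $\spann\{A_p,\xi_p\}$ at $p$, namely $Q_p$.

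For the second stage, the curvature of $\gamma$ as a unit-speed curve in $Q_p$ with respect to the normal $\xi$ is by definition $\langle\bar\nabla_{\dot\gamma}\dot\gamma,\xi\rangle=\langle\beta\xi,\xi\rangle=\beta$, so along $\gamma$ the curvature equals $\beta\circ\gamma$. Setting $\tilde\phi=\phi\circ\gamma$, $\tilde\alpha=\alpha\circ\gamma$, $\tilde\beta=\beta\circ\gamma$ and differentiating along $\gamma$, i.e.\ applying the vector field $A$, the formulas~\eqref{eq:differentialEquation} of Proposition~\ref{prop:Levi-Civita} give exactly the stated ODE system, because $A\phi$, $A\alpha$, $A\beta$ are precisely $\tilde\phi'$, $\tilde\alpha'$, $\tilde\beta'$ along the unit-speed curve $\gamma$ (noting that $A$ is a unit vector field, so arc length along $\gamma$ is the parameter). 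The initial conditions are immediate from $\gamma(0)=p$. Conversely, given that $Q_p$ is a space form of curvature $c/4$ (constant sectional curvature of a totally geodesic totally real surface), a unit-speed curve in $Q_p$ is uniquely determined by its signed curvature function and the initial point and velocity; hence $\gamma$ is the unique curve in $Q_p$ with $\gamma(0)=p$, $\dot\gamma(0)=A_p$, and curvature $\tilde\beta$, where $\tilde\beta$ solves the ODE above.

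The step I expect to require the most care is verifying that the plane distribution $\spann\{A,\xi\}$ along $\gamma$ is genuinely parallel in $\bar M^2(c)$, so that $\gamma$ does not leave $Q_p$: one must confirm that $\bar\nabla_A A$ and $\bar\nabla_A\xi$ have no component along $U$ or $V$. For $\bar\nabla_A\xi=-SA=-\beta A$ this is clear; for $\bar\nabla_A A$ the Gauss formula gives $\bar\nabla_A A=\nabla_A A+\langle SA,A\rangle\xi=0+\beta\xi$ using $\nabla_A A=0$ from Proposition~\ref{prop:Levi-Civita}, so in fact it lands exactly in $\R\xi$. This makes the argument clean: the curve $\gamma$ with $\dot\gamma=A$ has $\ddot\gamma\in\R\xi$ and velocity in a parallel plane, hence stays in the totally geodesic $Q_p$. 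A secondary point needing a remark is the constancy of sectional curvature of $Q_p$: since $Q_p$ is totally geodesic and totally real in $\bar M^2(c)$, the Gauss equation together with the curvature expression in Section~\ref{sect:construction} shows $Q_p$ has constant curvature $c/4$, which legitimizes invoking uniqueness of curves with prescribed curvature in a surface of constant curvature.
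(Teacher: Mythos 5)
Your argument is correct and follows essentially the same route as the paper: the key identities $\bar{\nabla}_AA=\beta\xi$ and $\bar{\nabla}_A\xi=-\beta A$, the totally real (hence curvature-invariant) plane $\R A_p\oplus\R\xi_p$, and the restriction of~\eqref{eq:differentialEquation} to $\gamma$; your ``parallel plane field'' justification of $\gamma\subset Q_p$ is exactly the Dombrowski-type alternative the paper mentions parenthetically, whereas the paper's main text closes this step by solving the Frenet system $\bar{\nabla}_{\dot\gamma}\dot\gamma=\tilde\beta\tilde\xi$, $\bar{\nabla}_{\dot\gamma}\tilde\xi=-\tilde\beta\dot\gamma$ inside the totally geodesic $Q_p$ and invoking uniqueness of ODE solutions, which is the rigorous form of your heuristic. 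One small remark: restricting~\eqref{eq:differentialEquation} yields $\tilde\alpha'=\tfrac{1}{4}\bigl(c(2-3\sin^2\tilde\phi)+4\tilde\alpha(\tilde\alpha-\tilde\beta)\bigr)\tan\tilde\phi$ with coefficient $\tfrac14$ as in Remark~\ref{remark:ODE}, not the $\tfrac12$ printed in the statement of Proposition~\ref{prop:gamma}, so the system you actually derive is the consistent one and the $\tfrac12$ is a typo in the statement.
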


\begin{proof}
From Proposition~\ref{prop:algebra} it is clear that $\langle JA,\xi\rangle=0$, and hence $\R A_p\oplus\R\xi_p$ is a totally real subspace of $T_p\bar{M}^2(c)$, that is, $J(\R A_p\oplus\R\xi_p)$ is orthogonal to $\R A_p\oplus\R\xi_p$. Then, it is well-known that $Q_p$ is a totally real, totally geodesic submanifold of $\bar{M}^2(c)$. We now prove that the curve $\gamma$ is contained in $Q_p$.

From the Gauss equation and Proposition~\ref{prop:Levi-Civita} it follows that
\[
\bar{\nabla}_AA=\beta\xi,\quad
\bar{\nabla}_A\xi=-\beta A.
\]

As a consequence, the curvature of $\gamma$ with respect to the vector field $\xi$ is given by  $\kappa[\gamma](t)=\langle\bar\nabla_{\dot\gamma(t)}\dot\gamma,\xi_{\gamma(t)}\rangle
=(\beta\circ\gamma)(t)$. It follows from Proposition~\ref{prop:Levi-Civita} that the functions $\phi$, $\alpha$, and $\beta$ are constant along the leaves of the distribution $\mathcal{D}$. Thus, equation~\eqref{eq:differentialEquation} translates into the differential equation given in the statement of Proposition~\ref{prop:gamma}. In particular, the curvature of $\gamma$ is given by the function $\tilde\beta(t)=(\beta\circ\gamma)(t)$. Therefore, the curve $\gamma$ is determined by the differential equation
\begin{equation}\label{eq:gamma}
\bar{\nabla}_{\dot\gamma}\dot\gamma=\tilde\beta\tilde\xi,\quad
\bar{\nabla}_{\dot\gamma}\tilde\xi=-\tilde\beta\dot\gamma,\quad
\gamma(0)=p, \quad\dot\gamma(0)=A_p, \quad\tilde\xi(0)=\xi_p.
\end{equation}
Now, since $Q_p$ is totally geodesic, the Levi-Civita connection of $Q_p$ coincides with the restriction of the Levi-Civita connection of $\bar{M}^2(c)$ to $Q_p$. Hence, the restriction of~\eqref{eq:gamma} to $Q_p$ is well-defined and has a solution in $Q_p$. This solution is thus a solution to~\eqref{eq:gamma} as well, and by uniqueness of solutions to ordinary differential equations both curves must be the same. Therefore, we have proved that $\gamma$ is contained in $Q_p$. (Another way of proving this fact is to use~\cite[Theorem~3.4]{Do75}.)

Finally, since $\gamma$ is contained in $Q_p$, and $Q_p$ is a complete $2$-dimensional Riemannian manifold of constant sectional curvature (because it is totally real and totally geodesic in $\bar{M}^2(c)$), then the curve $\gamma$ is determined by its curvature, an initial point, an initial tangent vector, and a choice of orientation given in this case by $\xi$. Hence, Proposition~\ref{prop:gamma} follows.
\end{proof}

We study some further properties of the integral submanifolds of the integrable distribution $\mathcal{D}$.

\begin{lemma}\label{lemma:intersection}
Let $p\in M$, $Q_p=\exp_p(\R A_p\oplus\R\xi_p)$, and $\gamma$ an integral curve of $A$ through $p$. Then, $Q_p$ intersects the integral submanifolds of $\mathcal{D}$ through $\gamma(t)$ perpendicularly.
\end{lemma}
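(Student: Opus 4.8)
The plan is to show that for any fixed $t$, the tangent space to $Q_p$ at $\gamma(t)$ and the tangent space to the leaf $L_{\gamma(t)}$ of $\mathcal{D}$ through $\gamma(t)$ are orthogonal inside $T_{\gamma(t)}\bar{M}^2(c)$. Since $\bar{M}^2(c)$ is $4$-dimensional and real, $T_{\gamma(t)}L_{\gamma(t)}=\spann\{U_{\gamma(t)},V_{\gamma(t)}\}$ is $2$-dimensional, so it suffices to produce a $2$-dimensional subspace of $T_{\gamma(t)}Q_p$ orthogonal to both $U$ and $V$ at $\gamma(t)$; equivalently, since $A_{\gamma(t)}=\dot\gamma(t)$ is already orthogonal to $U$ and $V$ (they span $\mathcal{D}$ and $A\in\Gamma(T_\beta)$ is orthogonal to $V$ and to $U\in\Gamma(T_\alpha)$), I just need one more independent vector of $T_{\gamma(t)}Q_p$ orthogonal to $\mathcal D_{\gamma(t)}$. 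The natural candidate is $\xi_{\gamma(t)}$, the unit normal of $M$: it is orthogonal to all of $TM$, hence in particular to $U_{\gamma(t)}$ and $V_{\gamma(t)}$.

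So the crux is to show that $\xi_{\gamma(t)}\in T_{\gamma(t)}Q_p$ for every $t$, and that together with $A_{\gamma(t)}=\dot\gamma(t)$ it spans $T_{\gamma(t)}Q_p$. Recall from the proof of Proposition~\ref{prop:gamma} that $\gamma$ lies in $Q_p$, so $\dot\gamma(t)\in T_{\gamma(t)}Q_p$ automatically. For $\xi$, I would use the computation $\bar{\nabla}_{\dot\gamma}\dot\gamma=\tilde\beta\,\tilde\xi$ and $\bar{\nabla}_{\dot\gamma}\tilde\xi=-\tilde\beta\,\dot\gamma$ established there, where $\tilde\xi$ is $\xi$ restricted along $\gamma$: this exhibits $\tilde\xi$ along $\gamma$ as a field obtained by parallel-transport-like equations involving only $\dot\gamma$ and $\tilde\xi$. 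Since $Q_p$ is totally geodesic, $\bar{\nabla}$ restricted to vector fields along $\gamma$ tangent to $Q_p$ stays tangent to $Q_p$; as $\tilde\xi(0)=\xi_p\in T_pQ_p$ (by definition $Q_p=\exp_p(\R A_p\oplus\R\xi_p)$) and $\dot\gamma$ is tangent to $Q_p$, the ODE $\bar{\nabla}_{\dot\gamma}\tilde\xi=-\tilde\beta\dot\gamma$ keeps $\tilde\xi(t)$ tangent to $Q_p$ for all $t$. Thus $\xi_{\gamma(t)}\in T_{\gamma(t)}Q_p$, and since $\xi_{\gamma(t)}$ and $\dot\gamma(t)=A_{\gamma(t)}$ are linearly independent (orthonormal, in fact) and $\dim Q_p=2$, they form a basis of $T_{\gamma(t)}Q_p$.

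Putting this together: $T_{\gamma(t)}Q_p=\spann\{A_{\gamma(t)},\xi_{\gamma(t)}\}$, both of which are orthogonal to $U_{\gamma(t)}$ and $V_{\gamma(t)}$, i.e. to $T_{\gamma(t)}L_{\gamma(t)}=\mathcal D_{\gamma(t)}$. Hence $Q_p$ meets the leaf of $\mathcal D$ through $\gamma(t)$ orthogonally at $\gamma(t)$, which is exactly the assertion. I do not expect a serious obstacle here; the only point requiring a little care is the "tangency is preserved along $\gamma$" argument for $\xi$, which is really just the remark that a totally geodesic submanifold is autoparallel, so solutions of linear ODEs in $\bar\nabla$ along a curve in $Q_p$ with initial data tangent to $Q_p$ remain tangent to $Q_p$ — the same uniqueness-of-ODE-solutions reasoning already used to place $\gamma$ inside $Q_p$.
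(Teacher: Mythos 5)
Your proposal is correct and follows essentially the same route as the paper: both reduce the lemma to showing $T_{\gamma(t)}Q_p=\spann\{A_{\gamma(t)},\xi_{\gamma(t)}\}$, which is automatically orthogonal to $\mathcal D_{\gamma(t)}=\spann\{U_{\gamma(t)},V_{\gamma(t)}\}$. The only (cosmetic) difference is how tangency of $\xi$ to $Q_p$ is verified: you solve $\bar\nabla_{\dot\gamma}\tilde\xi=-\tilde\beta\dot\gamma$ inside the autoparallel submanifold $Q_p$ and invoke ODE uniqueness, while the paper pairs $\xi$ against an arbitrary $D^\perp$-parallel normal field of $Q_p$ and shows the inner product stays zero; both are valid one-line consequences of $Q_p$ being totally geodesic.
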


\begin{proof}
By Proposition~\ref{prop:gamma}, $\gamma$ is contained in $Q_p$. Clearly, $A_{\gamma(t)}=\dot\gamma(t)$ is tangent to $Q_p$ along $\gamma$. We now show that $\xi_{\gamma(t)}$ is tangent to $Q_p$. Let $\eta$ be a vector field along $\gamma$ such that $\eta_{p}\in\nu_{p}Q_p$ and that is parallel with respect to the normal connection $D^\perp$ of $Q_p$. Then, since $Q_p$ is totally geodesic, the Weingarten formula implies $\bar\nabla_{\dot\gamma}\eta=D^\perp_{\dot\gamma}\eta=0$. Hence,
\[
\frac{d}{dt}\langle\xi,\eta\rangle=\langle\bar\nabla_{\dot\gamma}\xi,\eta\rangle
+\langle\xi,\bar\nabla_{\dot\gamma}\eta\rangle
=-\beta\langle\dot\gamma,\eta\rangle=0,
\]
and since $\langle\xi_{\gamma(0)},\eta_{\gamma(0)}\rangle=0$, and $\eta$ is arbitrary, we conclude that $\xi_{\gamma(t)}$ is tangent to $Q_p$ for all $t$.

Altogether this implies that $T_{\gamma(t)}Q_p=\spann\{A_{\gamma(t)},\xi_{\gamma(t)}\}$. Therefore, by construction we have $Q_p=Q_{\gamma(t)}$ for all $t$, and in particular, $Q_p$ is perpendicular to the leaf of $\mathcal{D}$ through $\gamma(t)$, as we wanted to show.
\end{proof}

We are now in position to state the key result of this section.

\begin{proposition}\label{prop:equidistant}
We have:
\begin{enumerate}[\rm (i)]
\item The integral submanifolds of $\mathcal{D}$ are equidistant submanifolds of $\bar{M}^2(c)$.

\item Let $L$ be an integral submanifold of the distribution $\mathcal{D}$, and let $L_t$ be an integral submanifold of $\mathcal{D}$ whose distance to $L$ is a sufficiently small number $t$. Then, in a neighborhood $\mathcal{U}$ of a point of $L$ there exists a parallel normal vector field $\eta_t$ such that
    \[
    L_t=\{\exp_p(\eta_t(p)):p\in \mathcal{U}\}.
    \]
\end{enumerate}
\end{proposition}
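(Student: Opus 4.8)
The goal is to prove Proposition~\ref{prop:equidistant}, namely that the integral submanifolds of $\mathcal{D}$ form an equidistant family and that nearby leaves are obtained by exponentiating parallel normal vector fields. The natural strategy is to use what we already know: each leaf $L$ of $\mathcal{D}$ is totally real and flat, with $A$ and $\xi$ spanning its normal bundle, and $A$, $\xi$ are $\nabla^\perp$-parallel along $L$ (Proposition~\ref{prop:distribution}); moreover, by Lemma~\ref{lemma:intersection}, the totally geodesic surface $Q_p=\exp_p(\R A_p\oplus\R\xi_p)$ meets the leaf of $\mathcal{D}$ through any point $\gamma(t)$ of the $A$-integral curve $\gamma$ through $p$ orthogonally, and $Q_p=Q_{\gamma(t)}$ for all $t$. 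So the $Q$-surfaces provide a canonical common orthogonal complement to the $\mathcal{D}$-leaves, and the $A$-integral curves provide geodesic-like connecting curves between them.

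First I would set up the map $\Phi\colon L\times(-\epsilon,\epsilon)\to M$, $\Phi(q,t)=\gamma_q(t)$, where $\gamma_q$ is the integral curve of $A$ with $\gamma_q(0)=q$; since $M$ is foliated by $\mathcal D$-leaves and by the $A$-curves, this is a diffeomorphism onto a tubular neighborhood, and for fixed $t$ the image $L_t=\Phi(L\times\{t\})$ is (contained in) the leaf of $\mathcal{D}$ through $\gamma_q(t)$ — here I use that $\phi,\alpha,\beta$ are constant along $\mathcal{D}$-leaves, so the leaf through $\gamma_q(t)$ does not depend on the choice of $q\in L$, only on $t$. Next, for the distance claim (i), the point is that the $A$-curves are arc-length parametrized curves in the totally geodesic $Q_p$'s which are perpendicular to $L$ at $t=0$ and, by Lemma~\ref{lemma:intersection}, perpendicular to $L_t$ at parameter $t$; a curve meeting each leaf of a foliation orthogonally and crossing a gap of constant parameter length realizes the distance between the leaves, so $d(L,L_t)=|t|$ for small $t$, uniformly — this is the standard argument that an integrable orthogonal foliation pair gives equidistant leaves, which one can phrase via the first variation of arc length, or by noting that $\|\dot\gamma_q\|\equiv 1$ and the curves are everywhere orthogonal to the $\mathcal D$-leaves.

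For part (ii), fix a leaf $L$ and a nearby leaf $L_t$ at distance $t$; I would produce the required parallel normal field $\eta_t$ directly from the geometry. Along each $A$-curve $\gamma_q$ we have $\bar\nabla_A A=\beta\xi$ and $\bar\nabla_A\xi=-\beta A$ from Proposition~\ref{prop:Levi-Civita} via Gauss, so $\gamma_q$ is a unit-speed curve in the flat-or-constant-curvature surface $Q_q$; consequently the endpoint $\gamma_q(t)$ is reached from $q$ by exponentiating a tangent vector $\eta_t(q)\in T_qQ_q\subset\nu_qL$ whose length is the $Q_q$-arc-length of $\gamma_q|_{[0,t]}$, which by (i) and the orthogonality is exactly the distance, independent of $q$. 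It remains to check $\eta_t$ is $\nabla^\perp$-parallel along $L$: since $A$ and $\xi$ are $\nabla^\perp$-parallel along $L$, and $\eta_t(q)$ is expressed through the curvature function $\tilde\beta$ — which is constant along $L$ — as a fixed linear combination of $A_q$ and $\xi_q$ (concretely, solving the planar ODE \eqref{eq:gamma} in $Q_q$ and reading off $\exp^{-1}_q\gamma_q(t)$ in the basis $\{A_q,\xi_q\}$), parallelness of $\eta_t$ follows from parallelness of the frame $\{A,\xi\}$ together with constancy of the coefficients along $L$.

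**Main obstacle.** The routine but delicate point is verifying that the surfaces $Q_q$ fit together smoothly as $q$ varies over $L$ and that $\eta_t(q)=\exp_q^{-1}(\gamma_q(t))$ is genuinely a \emph{parallel} normal field rather than merely a smooth one; this requires pinning down that the coefficients of $\eta_t(q)$ in the parallel frame $\{A_q,\xi_q\}$ depend only on $t$ and not on $q$, which in turn rests on the constancy of $\phi$, $\alpha$, $\beta$ along the leaves of $\mathcal D$ (Proposition~\ref{prop:Levi-Civita}) so that the initial data and hence the solution of the ODE governing $\gamma_q$ are the same for all $q\in L$. Establishing the equidistance in (i) with a \emph{uniform} $t$ over all of $L$ — as opposed to pointwise — is the conceptual heart, and it is exactly what the orthogonality from Lemma~\ref{lemma:intersection} is there to supply.
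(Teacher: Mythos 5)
Your proposal is correct and follows essentially the same route as the paper: part (i) via the unit-speed integral curves of $A$ being orthogonal to every leaf (the paper additionally notes $\nabla_A A=0$, so these curves are geodesics of $M$, which is what makes the orthogonal-foliation distance argument immediate), and part (ii) by observing that in the $\nabla^\perp$-parallel frame $\{A,\xi\}$ all the curves $\gamma_q$ solve the same planar ODE with the same initial data, so the components of $\eta_t(q)=\exp_q^{-1}(\gamma_q(t))$ are independent of $q$ and $\eta_t$ is parallel (the paper realizes this congruence explicitly by an ambient isometry $g$ with $g(p)=q$, $g_{*p}A_p=A_q$, $g_{*p}\xi_p=\xi_q$, giving $g\circ\gamma_p=\gamma_q$). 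One small slip, not load-bearing: the length of $\eta_t(q)$ is the chord distance in $Q_q$ from $q$ to $\gamma_q(t)$, not the arc-length of $\gamma_q|_{[0,t]}$, since $\gamma_q$ has curvature $\tilde\beta\neq 0$; your conclusion survives because the $q$-independence is supplied by the ODE argument rather than by this identification.
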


\begin{proof}
Let $L$ be a leaf of the distribution $\mathcal{D}$. We continue to denote by $\nabla^\perp$ the normal connection of $L$. Recall from Proposition~\ref{prop:distribution} that $\nabla^\perp$ is flat, and indeed, $\{A,\xi\}$ constitutes a parallel basis of the normal bundle $\nu L$ of $L$ as a submanifold of $\bar{M}^2(c)$.

Let $p\in L$, and let $\gamma_p$ be an integral curve of $A$ through $p$. We denote by $L_t$ the integral manifold of $\mathcal{D}$ through $\gamma_p(t)$. Since $A$ is a geodesic vector field in $M$ by Proposition~\ref{prop:Levi-Civita}, $\gamma_p$~is a unit speed geodesic in $M$. Since $\gamma_p$ is perpendicular to $L$ and $L_t$, $d_M(L,L_t)=\mathcal{L}({\gamma_p}_{\mid[0,t]})$, where $d_M$ denotes the Riemannian distance of $M$, and $\mathcal{L}(\cdot)$ is the length of a curve. The point $p$ is arbitrary, and thus the integral submanifolds of $\mathcal{D}$ are equidistant in $M$.

For a point $p\in L$, we consider the geodesic $\sigma_p$ of $\bar{M}^2(c)$ that minimizes the distance between $\sigma_p(0)=p$ and $\sigma_p(1)=\gamma_p(t)$. Since $Q_p=\exp_p(\R A_p\oplus\R\xi_p)$ is totally geodesic, $\gamma_p$~is contained in~$Q_p$ (see Proposition~\ref{prop:gamma}), and $t$ is sufficiently small, it follows that $\sigma_p$ is contained in~$Q_p$. Since $Q_p$ intersects $L$ and $L_t$ orthogonally by Lemma~\ref{lemma:intersection}, so does $\sigma_p$ and we conclude that $\sigma_p$ is a minimizing geodesic of $\bar{M}^2(c)$ between these two submanifolds. We define $\eta_t(p)=\dot\sigma_p(0)\in\R A_p\oplus\R \xi_p$ for an arbitrary $p\in L$.

Now let $q\in L$ be another point. By the previous argument the curve $\gamma_q$, if defined for time $t$, realizes the distance in $M$ between $L$ and $L_t$. The normal bundle of $L$ in $\bar{M}^2(c)$ is flat with respect to the normal connection $\nabla^\perp$, and $\{A,\xi\}$ is parallel. Thus, $Q_q$ is obtained by the parallel transport of $Q_p$ to $q$ along $L$.
There is a unique isometry $g$ of $\bar{M}^2(c)$ such that $g(p)=q$, $g_{*p}(A_p)=A_q$, and $g_{*p}(\xi_p)=\xi_q$, where $g_*$ denotes the differential of $g$ (note that $\R A\oplus\R \xi$ is totally real). Since $g$ is an isometry, and the curvature of $\gamma_q$ is given by the same function as that of $\gamma_p$ due to Proposition~\ref{prop:gamma}, it follows that the curve $g\circ\gamma_p$ satisfies the differential equation~\eqref{eq:gamma} with $q$ instead of $p$. By uniqueness, we have that $g\circ\gamma_p=\gamma_q$. Hence, the geodesic $\sigma_q$ that minimizes the distance between $q$ and $\gamma_q(t)$ coincides with $g\circ\sigma_p$, and thus it satisfies that $\dot\sigma_q(0)=g_{*p}(\dot\sigma_p(0))$. Since parallel transport is a linear isometry, and there is a unique isometry between $\nu_p L$ and $\nu_q L$ mapping $A_p$ to $A_q$ and $\xi_p$ to $\xi_q$, it follows that $\dot\sigma_q(0)$ is precisely the $\nabla^\perp$-parallel transport of $\dot\sigma_p(0)$ to $q$. Therefore, $\eta_t$ is a normal parallel vector field along $L$ wherever it is defined, and we conclude that $L_t=\{\exp_p(\eta_t(p)):p\in L\}$ (at least locally). This proves Proposition~\ref{prop:equidistant}.
\end{proof}

%%%%%%%%%%%%%%%%%%%%%%%%%%%%%%%%%%%%%%%%%%%%%%%%%%%%%%%%%%%%%%%%%%%%%%%%%%%%%%%

\section{Proof of the Main Theorem}\label{sect:proof}

Let $M$ be a real hypersurface of $\bar{M}^2(c)$ with two distinct principal curvatures. We further assume that $M$ is non-Hopf at every point, which means that $a$, $b\neq 0$ along $M$, in the notation of Section~\ref{sect:setup}. Assume also the notation of propositions~\ref{prop:algebra} and~\ref{prop:Levi-Civita} as usual. We have seen in Section~\ref{sect:distributions} that $M$ is foliated orthogonally by the integral submanifolds of the integrable $2$-dimensional distribution $\mathcal{D}=\spann\{U,V\}$, and by the integral curves of the vector field $A$.

The integral submanifolds of $\mathcal{D}$ are $2$-dimensional totally real, flat submanifolds with parallel second fundamental form and flat normal bundle in $\bar{M}^2(c)$. In particular, they are surfaces with parallel mean curvature in $\bar{M}^2(c)$. Theorem~\ref{th:principal} then guarantees that each integral submanifold of $\mathcal{D}$ is an open part of a principal orbit of a polar action of cohomogeneity two on $\bar{M}^2(c)$. In order to conclude the proof of the Main Theorem we have to show that all integral surfaces of $\mathcal{D}$ are open parts of principal orbits of the \emph{same} polar action.

Let $H\cdot p$ be a principal orbit of the action of $H$, and let $\eta$ be an $H$-equivariant normal vector field along $H\cdot p$. If $\nabla^\perp$ denotes now the normal connection of $H\cdot p$, then $\nabla^\perp\eta=0$, as equivariant vector fields are parallel~\cite[Corollary~3.2.5]{BCO03}. It is then clear that equivariant vector fields along $H\cdot p$ are in one-to-one correspondence with $\nabla^\perp$-parallel normal vector fields along $H\cdot p$. If $q\in\bar{M}^2(c)$, then there exists a minimizing geodesic $\sigma$ from $p$ to $H\cdot q$ which intersects both $H\cdot p$ and $H\cdot q$ orthogonally. We may assume that $\sigma(0)=p$, $\sigma(1)=q$, define $\eta_p=\dot\sigma(0)$, and extend $\eta_p$ to a normal parallel vector field $\eta$ along $H\cdot p$. Using that $\eta$ is also equivariant, it is then easy to obtain
\[
H\cdot q=\{h(\exp_p(\eta_p)):h\in H\}=\{\exp_{h(p)}(\eta_{h(p)}):h\in H\}
=\{\exp_x(\eta_x):x\in H\cdot p\}.
\]

We saw above that an integral submanifold $L$ of the distribution $\mathcal{D}$ is, up to holomorphic congruence, an open part of a principal orbit of the action of $H$ on $\bar{M}^2(c)$. According to Proposition~\ref{prop:equidistant}, the rest of the sufficiently close integral submanifolds of $\mathcal{D}$ are obtained locally as $\{\exp_q(\eta_t(q)):q\in \mathcal{U}\}$, where $\mathcal{U}$ is an open subset of $L$, and $\eta_t$ is a suitable parallel normal vector field along $\mathcal{U}\subset L$. Hence, all the integral submanifolds of $\mathcal{D}$ are open parts of principal orbits of the action of $H$. If $\gamma$ is an integral curve of the vector field~$A$, then it is obvious that in a neighborhood of the point $p$, the hypersurface $M$ is obtained as $M=H\cdot\gamma$, as stated in the Main Theorem. This concludes the proof.

We still have to justify the assertions made in Remark~\ref{remark:ODE}, which basically follow from the construction in Subsection~\ref{subsec:construction}, Proposition~\ref{prop:algebra}, and Proposition~\ref{prop:gamma}. Consider a group $H$ acting polarly with cohomogeneity two and section $\Sigma$ on $\bar{M}^2(c)$, and fix a regular point $p\in \Sigma$, and a unit vector $w\in T_p\Sigma$. Let $\gamma$ be one of the two locally defined unit speed curves on $\Sigma$ with $\gamma(0)=p$ and $\dot{\gamma}(0)=w$ such that $H\cdot\gamma$ has two principal curvatures (cf.~Section~\ref{sect:construction}). We have to show, under the assumption that $H\cdot\gamma$ is everywhere non-Hopf, that the curve $\gamma$ can be obtained by means of the initial value problem stated in Remark~\ref{remark:ODE}. We start by regarding $M=H\cdot\gamma$ as a real hypersurface with two principal curvatures in $\bar{M}^2(c)$, so that we can apply to it the study developed in sections~\ref{sect:setup} to~\ref{sect:distributions}. Let $\xi$ be a unit normal vector field along $M$. Recall from Section~\ref{sect:construction} that the principal curvatures $\alpha_0$ and $\beta_0$ of $H\cdot p$ at $p$ with respect to $\xi_p$ coincide with the two principal curvatures of $M$ at $p$. Assume without restriction of generality that $\beta_0$ has multiplicity two as principal curvature of $M$. In view of Proposition~\ref{prop:gamma}, it is enough to show that the tangent vector field to $\gamma$ is given by the vector field $A$ introduced in Section~\ref{sect:setup}. The Hopf vector $J\xi$ is orthogonal to $\dot{\gamma}$ along $\gamma$, since $\Sigma$ is totally real and $\xi_{\gamma(t)}$ is tangent to $\Sigma$ for all $t$. Moreover, as explained in Section~\ref{sect:construction}, $\dot{\gamma}(t)$~is a principal curvature vector of the hypersurface $M$ for all $t$. Therefore, it follows that $\dot{\gamma}$ is collinear with $A$. Since both $A$ and $\dot{\gamma}$ have unit length, reversing the sign of $\xi$ if necessary, we have that $\dot{\gamma}(t)=A_{\gamma(t)}$ for all $t$. This proves Remark~\ref{remark:ODE}.

\begin{remark}
Let us explain why, in the classification part of our Main Theorem, we make the assumption that the real hypersurface $M$ is non-Hopf at every point. On the one hand, if it were Hopf (at every point), arguments similar to those in sections~\ref{sect:setup} and~\ref{sect:Levi-Civita} show that $M$ would have constant principal curvatures, thus leading to well-known examples. On the other hand, we should consider the case when $M$ is Hopf only along a subset of $M$ whose complement in $M$ is open and dense. In this situation, the vector fields $V$ and $A$ in Proposition~\ref{prop:algebra} might not be well-defined in the points where $M$ is Hopf. However, in any case we know that there is an open and dense subset of $M\subset \bar{M}^2(c)$ that has the structure described in the Main Theorem.
\end{remark}

%%%%%%%%%%%%%%%%%%%%%%%%%% Bibliography %%%%%%%%%%%%%%%%%%%%%%%%%%%
%\bibliographystyle{amsplain}


\begin{thebibliography}{99}
\bibitem{Be89} J.~Berndt: Real hypersurfaces with constant
    principal curvatures in complex hyperbolic space, \emph{J.\
    Reine Angew.\ Math.}\ \textbf{395} (1989), 132--141.

\bibitem{BCO03}
    J.\ Berndt, S.\ Console, C.\ Olmos: \emph{Submanifolds and holonomy},
    Chapman \& Hall/CRC Research Notes in Mathematics, \textbf{434}, Chapman \& Hall/CRC,
    Boca Raton, FL, 2003.

\bibitem{BD06} J.~Berndt, J.~C.~D\'{\i}az-Ramos: Real hypersurfaces with constant principal curvatures in complex hyperbolic spaces, \emph{J.\ London Math.\ Soc.}\ {\bf 74} (2006), 778--798.

\bibitem{BD11a} J.~Berndt, J.~C.~D\'{\i}az-Ramos: Polar actions on the complex hyperbolic plane, \emph{Ann.\ Global Anal.\ Geom.}\ \textbf{43} (2013), 99--106.

\bibitem{CR82} T.~E.~Cecil, P.~J.~Ryan: Focal sets and real hypersurfaces in complex projective space, \emph{Trans.\ Amer.\ Math.\ Soc.}\ \textbf{269} (1982), no.~2, 481--499.

\bibitem{Ch10} B.-Y.~Chen: Submanifolds with parallel mean curvature vector in Riemannian and indefinite space forms, \emph{Arab J.\ Math.\ Sci.}\ \textbf{16} (2010), no.~1, 1--46.

\bibitem{DR13} J.~C.~D\'{\i}az-Ramos: Polar Actions on Symmetric Spaces, \emph{Recent Trends in Lorentzian Geometry}, Springer Proceedings in Mathematics \& Statistics, Springer, 2013, 315--334.

\bibitem{DDK} J.~C.~D\'{\i}az-Ramos, M.~Dom\'{\i}nguez-V\'{a}zquez, A.~Kollross: Polar actions on complex hyperbolic spaces, arXiv:1208.2823v1 [math.DG].

\bibitem{DK11} J.\ C.\ D\'{\i}az-Ramos, A.\ Kollross: Polar actions with a fixed point, \emph{Differential Geom.\ Appl.}\ \textbf{29} (2011), 20--25.

\bibitem{Do75} P.~Dombrowski: Differentiable maps into Riemannian manifolds of constant stable osculating rank I, \emph{J.\ Reine Angew.\ Math.}\ \textbf{274/275} (1975), 310--341.

\bibitem{Fe12} D.~Fetcu: Surfaces with parallel mean curvature vector in complex space forms, \emph{J.\ Differential Geom.}\ \textbf{91} (2012), no.~2, 215--232.

\bibitem{Hi04} S.~Hirakawa: On the periodicity of planes with parallel mean curvature vector in $\C H^2$, \emph{Tokyo J.\ Math.}\ \textbf{27} (2004), no.~2, 519--526.

\bibitem{Hi06} S.~Hirakawa: Constant Gaussian curvature surfaces with parallel mean curvature vector in two-dimensional complex space forms, \emph{Geom.\ Dedicata} \textbf{118} (2006), 229--244.

\bibitem{Ja95} H.~Jacobowitz: Real hypersurfaces and complex analysis, \emph{Notices Amer.\ Math.\ Soc.}\ \textbf{42} (1995), no.~12, 1480--1488.

\bibitem{KM00} K.~Kenmotsu, K.~Masuda: On minimal surfaces of constant curvature in two-dimensional complex space form, \emph{J.\ Reine Angew.\ Math.}\ \textbf{523} (2000), 69--101.

\bibitem{KZ00} K.~Kenmotsu, D.~Zhou: The classification of the surfaces with parallel mean curvature vector in two-dimensional complex space forms, \emph{Amer.\ J.\ Math.}\ \textbf{122} (2000), no.~2, 295--317.

\bibitem{Ki86} M. Kimura: Real hypersurfaces and complex
    submanifolds in complex projective space, \emph{Trans.\ Amer.\
    Math.\ Soc.}\ \textbf{296} (1986), 137--149.

\bibitem{LOY75} G.~D.~Ludden, M.~Okumura, K.~Yano: A totally real surface in $\C P^2$ that is not totally geodesic, \emph{Proc.\ Amer.\ Math.\ Soc.}\ \textbf{53} (1975), no.~1, 186--190.

\bibitem{Ly10} A.~Lytchak: Geometric resolution of singular Riemannian foliations, \emph{Geom.\ Dedicata} \textbf{149} (2010), 379--395.

\bibitem{Mo85} S.~Montiel: Real hypersurfaces of a complex hyperbolic space, \emph{J.\ Math.\ Soc.\ Japan} {\bf 37} (1985), 515--535.

\bibitem{Na83a} H.~Naitoh: Parallel submanifolds of complex space forms I, \emph{Nagoya Math.\ J.} \textbf{90} (1983), 85--117.

\bibitem{Na83b} H.~Naitoh: Parallel submanifolds of complex space forms II, \emph{Nagoya Math.\ J.} \textbf{91} (1983), 119--149.

%\bibitem{Na86} H.~Naitoh: Symmetric submanifolds of compact symmetric spaces, \emph{Tsukuba J.\ Math.} \textbf{10} (1986), 215--242.

%\bibitem{NaTa82} H.~Naitoh, M.~Takeuchi: Totally real submanifolds and symmetric bounded domains, \emph{Osaka J.\ Math.} \textbf{19} (1982), 717--731.

\bibitem{NR97} R.~Niebergall, P.~J.~Ryan: Real hypersurfaces in complex space forms, \emph{Tight and Taut Submanifolds}, MSRI Publications, Volume {\bf 32}, 1997.

\bibitem{Og95} T.~Ogata: Surfaces with parallel mean curvature vector in $P^2(\C)$, \emph{Kodai Math.\ J.}\ \textbf{18} (1995), 397--407.

\bibitem{ON66} B.~O'{}Neill: The fundamental equations of a submersion, \emph{Michigan Math.\ J.}\ {\bf 13} (1966), 459--469.

\bibitem{PT99} F.~Podest\`{a}, G.~Thorbergsson: Polar actions on rank-one symmetric spaces, \emph{J.\ Differential Geom.} \textbf{53} (1999), 131--175.

\bibitem{Ta75a} R. Takagi: Real hypersurfaces in a complex projective space with constant principal curvatures, \emph{J.\ Math.\ Soc.\ Japan} {\bf 27} (1975), 43--53.

\bibitem{TT63} Y.~Tashiro, S.~I.~Tachibana: On Fubinian and $C$-Fubinian manifolds, \emph{Kodai Math.\ Sem.\ Rep.}\ \textbf{15} (1963), 176--183.

%\bibitem{Th10} G.~Thorbergsson: Singular Riemannian foliations and isoparametric submanifolds, \emph{Milan J.\ Math.}\ \textbf{78} (2010), no.~1, 355--370.

\end{thebibliography}
\end{document}